\numberwithin{equation}{section}
\newtheorem{theorem}{Theorem}[section]
\newtheorem{proposition}[theorem]{Proposition}
\newtheorem{lemma}[theorem]{Lemma}
\newtheorem{corollary}[theorem]{Corollary}
\theoremstyle{definition}
\newtheorem{definition}[theorem]{Definition}
\newtheorem{example}[theorem]{Example}
\theoremstyle{remark}
\newcommand{\Z}{\mathcal{Z}}
\newcommand{\C}{\mathbb{C}}
\newcommand{\F}{\mathcal{F}}
\newcommand{\RZ}{\mathbb{R}\mathcal{Z}}
\title{Whitehead products in moment-angle complexes}
\author{Kouyemon Iriye}
\address{Department of Mathematical Sciences, Osaka
Prefecture University, Sakai, 599-8531, Japan}
\email{kiriye@mi.s.osakafu-u.ac.jp}
\author{Daisuke Kishimoto}
\address{Department of Mathematics, Kyoto University, Kyoto, 606-8502, Japan}
\email{kishi@math.kyoto-u.ac.jp}
\subjclass[2010]{Primary 55Q15, Secondary 55P15}
\keywords{polyhedral product, Whitehead product, fillable complex, shellable complex}
\begin{document}

\baselineskip.525cm

\maketitle

\begin{abstract}
In toric topology, to a simplicial complex $K$ with $m$ vertices, one associates two spaces, the moment-angle complex $\Z_K$ and the Davis-Januszkiewicz space $DJ_K$. These spaces are connected by a homotopy fibration $\Z_K\to DJ_K\to(\C P^\infty)^m$. In this paper, we show that the map $\Z_K\to DJ_K$ is identified with a wedge of iterated (higher) Whitehead products for a certain class of simplicial complexes $K$ including dual shellable complexes. We will prove the result in a more general setting of polyhedral products.
\end{abstract}


\section{Introduction}


\subsection{Moment-angle complex}

In a seminal work introducing quasitoric manifolds \cite{DJ}, Davis and Januszkiewicz constructed a space from a simple convex polytope (or equivalently, the dual simplicial convex polytope) as a topological analogue of the hyperplane arrangement appearing in the theory of toric varieties so that every quasitoric manifold is obtained as the quotient of the space by a certain torus action. Later on, the construction of this space is generalized to any simplicial complex as follows. Let $K$ be a simplicial complex on the vertex set $[m]=\{1,\ldots,m\}$. The moment-angle complex $\Z_K$ is defined as the union of subspaces $D(\sigma)=\{(z_1,\ldots,z_m)\in(D^2)^m\,\vert\,|z_i|=1\text{ for }i\not\in\sigma\}$ of $(D^2)^m$ for all $\sigma\in K$, where $D^2=\{z\in\C\,\vert\,|z|\le 1\}$.

The moment-angle complex is now a fundamental object not only as a source of quasitoric manifolds but also as a space connecting toric topology with a broad area of mathematics including algebraic geometry, algebraic topology, combinatorics, commutative algebra, and geometry. In particular, recent development of the study on the homotopy type of $\Z_K$ in connection with combinatorics and commutative algebra is significant \cite{GT3,GW,IK1,IK3}.


\subsection{Object of study}

Davis and Januszkiewicz \cite{DJ} also constructed a supplementary space from a simple convex polytope, and it is also generalized to any simplicial complex. The supplementary space associated with a simplicial complex $K$ is called the Davis-Januszkiewicz space and denoted by $DJ_K$ which is defined as the union of the subspaces $E(\sigma)=\{(x_1,\ldots,x_m)\in(\C P^\infty)^m\,\vert\,x_i=*\text{ for }i\not\in\sigma\}$ of $(\C P^\infty)^m$ for all $\sigma\in K$, where $*\in\C P^\infty$ is the basepoint.

By definition, there is a natural action of torus $(S^1)^m$ on $\Z_K$ and the Davis-Januszkiewicz space $DJ_K$ is homotopy equivalent to the Borel construction of this torus action. Then in particular, there is a homotopy fibration
\begin{equation}
\label{fibration_toric}
\Z_K\xrightarrow{\widetilde{w}}DJ_K\to(\C P^\infty)^m.
\end{equation}
The object of study in this paper is the fiber inclusion $\widetilde{w}$.


\subsection{Previous work and aim}

When $m=2$ and $K$ consists of two vertices, i.e. the boundary of 1-simplex, then we have $\Z_K=S^3$ and $DJ_K=\C P^\infty\vee\C P^\infty$ so that the homotopy fibration \eqref{fibration_toric} coincides with Ganea's homotopy fibration
$$S^3\to\C P^\infty\vee\C P^\infty\to(\C P^\infty)^2.$$
Thus in particular, the map $\widetilde{w}$ is the Whitehead product of the two copies of the bottom cell inclusion $S^2\to\C P^\infty$. More generally, if $K$ is the boundary of $(m-1)$-simplex, then the map $\widetilde{w}$ is the higher Whitehead product of the $m$-copies of the bottom cell inclusion $S^2\to\C P^\infty$.

This lets us study a connection between the map $\widetilde{w}$ and Whitehead products when $K$ is controlled by the boundaries of simplices, or equivalently, minimal non-faces which will be defined below. Along this line, Grbi\'c and Theriault \cite{GT2} introduced a directed MF-complex which is the union of well arranged boundaries of simplices and showed that $\Z_K$ is homotopy equivalent to a wedge of spheres, through which the map $\widetilde{w}$ is identified with a wedge of iterated Whitehead products including at most one higher Whitehead product. Their proof is complicated and seems to have a serious gap. They calculated the rational loop homology of $DJ_K$ and identified its generators with iterated Whitehead products. Then they deduce the result by observing the rational homotopy equivalences can be replaced with genuine homotopy equivalences, where there seems to be a gap in this step.

The aim of this paper is to generalize (and to give a correct proof of) the result of Grbi\'c and Theriault \cite{GT2} to a much larger class of simplicial complexes by a simpler method employing the recent result of the authors \cite{IK3} on the homotopy type of $\Z_K$, where the result of \cite{IK3} has applications \cite{IK4,IK5} on Golodness in combinatorial commutative algebra too. Abramyan \cite{A} showed that $\widetilde{w}$ is not necessarily a wedge of iterated Whitehead products even if $\Z_K$ is homotopy equivalent to a wedge of spheres. So the choice of $K$ is crucial.


\subsection{Polyhedral product}

In \cite{BBCG}, Bahri, Bendersky, Cohen, and Gitler unified and generalized the construction of $\Z_K$ and $DJ_K$, and introduced a space called a polyhedral product. Polyhedral products enable us to study the homotopy theory of $\Z_K$ and $DJ_K$ with a wider viewpoint and better homotopy theoretical techniques.

In our case, the map $\widetilde{w}$ can be defined in a more general setting using polyhedral products so that we will study this generalized map $\widetilde{w}$ in what follows, where the previous result \cite{GT2} actually considered polyhedral products. However, in this introduction, we will state our result in terms of $\Z_K$ and $DJ_K$ for readability.


\subsection{Totally fillable complex}

Now we introduce a simplicial complex for which we are going to study the map $\widetilde{w}$. We set notation. Let $L$ be a simplicial complex on the vertex set $V$. Let $|L|$ denote the geometric realization of $L$ and for a non-empty subset $I\subset V$, define the full subcomplex of $L$ on $I$ by $L_I=\{\sigma\in L\,\vert\,\sigma\subset I\}$. A subset $\sigma\subset V$ is called a minimal non-face of $L$ if it is not a simplex of $L$ and any of its proper subset is a simplex of $L$. In particular, if we add minimal non-faces to $L$, then we get a new simplicial complex.

\begin{definition}
A simplicial complex $K$ is called fillable if there is a collection of minimal non-faces $\{\sigma_1,\ldots,\sigma_r\}$ such that $|K\cup\sigma_1\cup\cdots\cup\sigma_r|$ is contractible. If any full subcomplex of $K$ is fillable, then it is called totally fillable.
\end{definition}

The collection of minimal non-faces $\{\sigma_1,\ldots,\sigma_r\}$ in the above definition is called a filling of $K$ and denoted by $\F(K)$, where there are possibly several fillings of a fillable complex $K$. The class of totally fillable complexes includes important simplicial complexes called dual shellable. We will show that directed MF-complexes that were considered in the previous work \cite{GT2} are dual shellable so that they are totally fillable too.


\subsection{Statement of the result}

The key to study the map $\widetilde{w}$  for a totally fillable complex $K$ is the following homotopy decomposition of $\Z_K$ which was obtained in \cite{IK3}.

\begin{theorem}
\label{decomp Z}
Let $K$ be a totally fillable complex on the vertex set $[m]$ with fillings $\F(K_I)$ of $K_I$ for all $\emptyset\ne I\subset[m]$. Then there is a homotopy equivalence
$$\Z_K\simeq\bigvee_{\emptyset\ne I\subset[m]}\bigvee_{\sigma\in\F(K_I)}S^{|\sigma|+|I|-1}.$$
\end{theorem}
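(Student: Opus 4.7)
My plan is to combine the Bahri--Bendersky--Cohen--Gitler stable splitting of $\Z_K$ with total fillability to first obtain the wedge decomposition after one suspension, and then to desuspend. First, I would apply the BBCG splitting
$$\Sigma\Z_K\simeq\bigvee_{\emptyset\ne I\subset[m]}\Sigma^{|I|+2}|K_I|,$$
which reduces the question to identifying the homotopy type of $\Sigma|K_I|$ for each nonempty $I\subset[m]$.

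Next, I would unpack fillability. Given a filling $\F(K_I)=\{\sigma_1,\ldots,\sigma_r\}$, each $\sigma_j$ is a minimal non-face of $K_I$, so $|\partial\sigma_j|$ lies in $|K_I|$ and the enlarged complex $|K_I\cup\sigma_1\cup\cdots\cup\sigma_r|$ is obtained from $|K_I|$ by attaching a $(|\sigma_j|-1)$-cell along $|\partial\sigma_j|$ for each $j$. Since the enlarged complex is contractible by hypothesis, the cofibration sequence
$$|K_I|\hookrightarrow|K_I\cup\sigma_1\cup\cdots\cup\sigma_r|\to\bigvee_{\sigma\in\F(K_I)}S^{|\sigma|-1}$$
yields a homotopy equivalence $\Sigma|K_I|\simeq\bigvee_{\sigma\in\F(K_I)}S^{|\sigma|-1}$. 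Substituting this into the BBCG splitting produces the asserted wedge as the suspension $\Sigma\Z_K$, so the sphere summands $\{S^{|\sigma|+|I|-1}\}$ and their multiplicities are forced by the stable calculation.

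The main obstacle is the desuspension step, since a stable wedge-of-spheres decomposition does not generally lift to an unstable one. My plan here is to argue by induction on $m$, using the standard pushout
$$\Z_K\simeq\Z_{K\setminus v}\cup_{\Z_{\operatorname{lk}_K v}\times S^1}\bigl(\Z_{\operatorname{lk}_K v}\times D^2\bigr)$$
at a judiciously chosen vertex $v$. The inductive hypothesis, applied to the smaller totally fillable complexes appearing on the right, provides wedge-of-spheres decompositions for both pieces, and the technical heart is to show that the attaching map in the pushout is null-homotopic on each wedge summand, so that $\Z_K$ inherits the predicted wedge structure. The bookkeeping is clean: pairs $(I,\sigma)$ with $v\notin I$ descend from $\Z_{K\setminus v}$, while those with $v\in I$ arise by suspending contributions from $\Z_{\operatorname{lk}_K v}$ indexed by fillings of the appropriate subcomplex on $I\setminus\{v\}$. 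Verifying this null-homotopy using the combinatorics of fillings is where total fillability is used essentially, and is the step at which the naive rational-to-integral approach of \cite{GT2} appears to break down.
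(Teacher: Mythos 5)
Your first step (the BBCG stable splitting plus the cofibration argument identifying $\Sigma|K_I|$ with $\bigvee_{\sigma\in\F(K_I)}S^{|\sigma|-1}$) is correct, and the latter is essentially Proposition \ref{totally fillable} of the paper. But the entire content of the theorem is the desuspension, and your plan for it has a genuine gap. Total fillability is a condition on \emph{full subcomplexes} $K_I$ only; it says nothing about links. The link $\mathrm{lk}_K(v)$ is not a full subcomplex of $K$, so in the pushout
$\Z_K\simeq(\Z_{\mathrm{dl}_K(v)}\times S^1)\cup_{\Z_{\mathrm{lk}_K(v)}\times S^1}(\Z_{\mathrm{lk}_K(v)}\times D^2)$
you cannot invoke the inductive hypothesis for $\Z_{\mathrm{lk}_K(v)}$: there is no reason $\mathrm{lk}_K(v)$ is totally fillable (the classes that do control links, such as dual totally deletable or dual shellable, are strictly smaller subclasses). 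Moreover, even granting wedge decompositions of both legs, the pushout is a double mapping cylinder over the \emph{product} $\Z_{\mathrm{lk}_K(v)}\times S^1$, and "the attaching map is null-homotopic on each wedge summand" is far from sufficient to split such a pushout as a wedge; this is exactly the kind of unverified gluing step where the argument of \cite{GT2} breaks down, and you have not supplied it.

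The paper avoids desuspension altogether. It uses the fat wedge filtration $*=\Z_K^0\subset\cdots\subset\Z_K^m=\Z_K$, which by Theorems \ref{FWF RZ} and \ref{FWF Z} is an honest unstable cone decomposition: each filter is obtained from the previous one by attaching cones $C(|K_I|*Y^{*I})$ along explicit maps $\overline{\varphi}_{K_I}$ built from $\varphi_{K_I}$ and the higher Whitehead product. The key point is that $\varphi_{K_I}$ factors through $|K_I\cup\sigma_1\cup\cdots\cup\sigma_r|$, which is contractible precisely because $K_I$ is fillable; hence $\varphi_{K_I}\simeq*$, then $\overline{\varphi}_{K_I}\simeq*$ by Proposition \ref{null homotopy}, and Lemma \ref{decomp lem} splits off each cone as a wedge summand $|\Sigma K_I|\wedge\widehat{X}^I$. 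Combining with Proposition \ref{totally fillable} gives the theorem unstably, with no induction on vertices and no link analysis. If you want to salvage your route, you would have to either restrict to a class closed under taking links or replace the vertex-deletion induction with an argument that, like the fat wedge filtration, only ever references full subcomplexes.
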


Let $\widetilde{a}_i\colon S^2\to DJ_K$ be the inclusion of the bottom cell of the $i$-th $\C P^\infty$ in $DJ_K$. For $\sigma\subset[m]$ with $|\sigma|\ge 2$, let $\widetilde{w}_\sigma$ be the higher Whitehead product of $\widetilde{a}_i$ for $i\in\sigma$ if it is defined. Now we state our result.

\begin{theorem}
\label{main1}
Let $K$ be a totally fillable complex on the vertex set $[m]$ with fillings $\F(K_I)$ of $K_I$ for all $\emptyset\ne I\subset[m]$. The equivalence of Theorem \ref{decomp Z} can be chosen so that the composite
$$S^{|\sigma|+|I|-1}\to\bigvee_{\emptyset\ne I\subset[m]}\bigvee_{\sigma\in\F(K_I)}S^{|\sigma|+|I|-1}\simeq\Z_K\xrightarrow{\widetilde{w}}DJ_K$$
is the iterated Whitehead product
$$[\cdots[[\widetilde{w}_\sigma,\widetilde{a}_{i_1}],\widetilde{a}_{i_2}],\cdots,\widetilde{a}_{i_k}]$$
where $i_1,\ldots,i_k$ is a certain ordering of $I-\sigma$.
\end{theorem}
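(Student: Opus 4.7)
The plan is to exploit the naturality of the fiber inclusion $\widetilde{w}$ under full subcomplex inclusions to reduce the statement to the ``top summand'' case (where $I = [m]$), and then to analyze this case by induction on $k = m - |\sigma|$. For each non-empty $I \subset [m]$, the inclusion $K_I \hookrightarrow K$ as a full subcomplex is a simplicial map, so polyhedral product functoriality yields a commutative square
\[
\begin{array}{ccc}
\Z_{K_I} & \xrightarrow{\widetilde{w}} & DJ_{K_I} \\
\downarrow & & \downarrow \\
\Z_K & \xrightarrow{\widetilde{w}} & DJ_K.
\end{array}
\]
The equivalence of Theorem~\ref{decomp Z} can be arranged so that the summand $S^{|\sigma|+|I|-1}$ indexed by $(I, \sigma)$ in the decomposition of $\Z_K$ factors through $\Z_{K_I}$ as the analogous ``self'' summand (the one obtained by decomposing $\Z_{K_I}$ on its own vertex set $I$ and taking the $(I, \sigma)$ pair). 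Since the inclusion $DJ_{K_I} \hookrightarrow DJ_K$ carries iterated Whitehead products of the bottom cells $\widetilde{a}_i$, $i \in I$, to the corresponding iterated Whitehead products in $DJ_K$, it suffices to prove the statement for the top summand, i.e., when $I = [m]$.

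For the base case $k = 0$, we have $\sigma = [m]$, and minimality of $\sigma$ as a non-face forces $K = \partial\Delta^{m-1}$, so $\Z_K = S^{2m-1}$ and $\widetilde{w}$ is the classical higher Whitehead product $\widetilde{w}_{[m]}$, matching the empty bracket. For the inductive step, we select a vertex $v \in [m] - \sigma$ whose removal preserves the fillability structure, so that $K - v := K_{[m]-\{v\}}$ is totally fillable with $\sigma \in \F(K - v)$; this $v$ becomes the final $i_k$. By the inductive hypothesis, the top summand $S^{|\sigma|+(m-1)-1} \to \Z_{K-v} \to DJ_{K-v}$ is the iterated Whitehead product over $i_1, \ldots, i_{k-1}$. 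To propagate this to $\Z_K$, we use the Mayer--Vietoris splitting $K = (K - v) \cup \mathrm{star}_K(v)$ with intersection $\mathrm{link}_K(v)$ to obtain a pushout description of $\Z_K$ (and analogously of $DJ_K$). The new vertex $v$ contributes an additional $\C P^\infty$-factor to $DJ_K$ whose bottom cell is $\widetilde{a}_v$, and under $\widetilde{w}$ the attaching map of the new top summand $S^{|\sigma|+m-1}$ in $\Z_K$ realizes the Whitehead bracket of the previous iterated Whitehead product with $\widetilde{a}_v$.

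The main obstacle is the precise identification of this attaching map with the stated Whitehead bracket. A priori the attaching map lies in a homotopy group with indeterminacy, and the pushout structure must select the correct representative. This requires (i) verifying the existence of a suitable $v$ at each inductive step, which follows from the combinatorial structure of totally fillable complexes (and in particular from the shelling data in the dual shellable case), and (ii) a careful compatibility analysis of the pushouts on $\Z_K$ and $DJ_K$ via the fibration $\Z_K \to DJ_K \to (\C P^\infty)^m$. The geometric content of total fillability is precisely what makes such detection possible and allows us to bypass the rational-to-integral passage responsible for the gap in the previous work of Grbi\'c and Theriault.
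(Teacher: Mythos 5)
Your outline captures the overall shape of the result — reduce to the ``top'' summand $I=[m]$ by full subcomplex naturality, then handle that summand by peeling off the vertices of $I-\sigma$ one at a time — but the heart of the proof, which is precisely the step you flag as ``the main obstacle,'' is left as a plan rather than an argument, and it is where your route diverges from the paper's in a way that would be hard to complete as stated.

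The paper does not run an induction directly on $K$ with a Mayer--Vietoris pushout $K=(K-v)\cup\mathrm{star}_K(v)$. Instead it interposes a crucial intermediate reduction: for $\sigma\in\F(K)$, the contraction ordering gives a subcomplex $\Delta([m],\sigma)\subset K$ (on the same vertex set, the boundary $\partial\Delta^\sigma$ together with the remaining vertices as isolated points), and Corollary~\ref{naturality Delta} shows that the chosen null homotopies of the attaching maps $\varphi_{K_I}$ are compatible with those of $\varphi_{\Delta([m],\sigma)_I}$. This replaces the general $K$ by the very rigid complex $M=\Delta([m],\sigma)$, for which the polyhedral products are explicit. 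The induction is then carried out on $m$ for $M=\Delta([m],[k])$, and the Whitehead bracket arises not from an abstract pushout but from the explicit half--smash decomposition $\widetilde{Z}(k)=(\Z_{\Delta([m-1],[k])}(\underline{X}_{[m-1]})\times X_m)\cup(*\times CX_m)$ and the identification in Proposition~\ref{naturality half} of the ``top cell'' inclusion with the map $q$ into that half--smash. The attaching maps in the fat wedge filtration are themselves defined using the universal higher Whitehead product $\omega$ (Theorem~\ref{FWF Z}), so the bracket structure is built into the model from the start; there is no indeterminacy to resolve by a ``careful compatibility analysis,'' because the relevant null homotopy is pinned down by the contraction ordering.

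Your proposal has two concrete gaps. First, inducting by deleting a vertex from an arbitrary totally fillable $K$ leaves you with $K_{[m]-\{v\}}$, a complex whose minimal non-faces and fillings are not controlled in a way that isolates the single summand $(I,\sigma)$ you care about; the paper's reduction to $\Delta([m],\sigma)$ is exactly what sidesteps this. Second, and more seriously, you assert that the new top summand ``realizes the Whitehead bracket of the previous iterated Whitehead product with $\widetilde{a}_v$,'' but you supply no mechanism by which the pushout description produces that identification. That identification is the entire content of Lemma~\ref{main lem} together with Proposition~\ref{naturality half}, and it depends on the precise cone decomposition of Theorem~\ref{FWF Z} and on the naturality of the chosen null homotopies (Proposition~\ref{null homotopy}), none of which appear in your sketch. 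As written, your argument would not close without essentially reconstructing that machinery.
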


\emph{Acknowledgement:} The authors were partly supported by JSPS KAKENHI (No. 26400094 and No. 17K05248).


\section{Fillable complex}

Throughout this paper, let $K$ be a simplicial complex on the vertex set $[m]$. We will assume that a totally fillable complex $K$ is given specific fillings $\F(K_I)$ of $K_I$ for all $\emptyset\ne I\subset[m]$ unless otherwise is specified.


\subsection{Deletable complex}

In \cite{IK3}, it is proved that dual shellable complexes are totally fillable. The proof there actually shows that dual shellable complexes are in a certain subclass of totally fillable complexes. Here we introduce this subclass. A simplicial complex $K$ is called deletable if there are facets $\sigma_1,\ldots,\sigma_r$ such that $K-\{\sigma_1,\ldots,\sigma_r\}$ is collapsible, where $r$ can be 0, i.e. $K$ itself is collapsible. $K$ is called totally deletable if $\mathrm{lk}_{K_{[m]-S}}(v)$ is deletable for any $S\subset[m]$, possibly empty, and $v\in[m]-S$, where $\mathrm{lk}_L(w)=\{\sigma\in L\,\vert\,w\not\in\sigma,\,\sigma\cup w\in L\}$ is the link of a vertex $w$ of a simplicial complex $L$.

Let $L$ be a simplicial complex on the ground set $V$ which is a set including the vertex set. The Alexander dual of $L$ with respect to $V$, denoted $L^\vee$, is the simplicial complex consisting of $\sigma\subset V$ such that $V-\sigma$ is not a simplex of $L$. If we do not specify the ground set, then the Alexander dual will be taken over the vertex set. The following dictionary is useful, which is proved in \cite{IK3}. For a vertex $v$ of $L$, let $\mathrm{dl}_L(v)=\{\sigma\in L\,\vert\,\sigma\subset V-\{v\}\}$ be the deletion of $v$.

\begin{proposition}
  \label{dictionary}
  Let $L$ be a simplicial complex on the ground set $V$.
  \begin{enumerate}
    \item $(L^\vee)^\vee=L$, where the Alexander duals are taken over $V$.
    \item $\sigma\subset V$ is a facet of $L$ if and only if $\sigma^\vee$ is a minimal non-face of $L^\vee$, where $\sigma^\vee=V-\sigma$.
    \item For any $v\in V$, $\mathrm{dl}_L(v)^\vee=\mathrm{lk}_{L^\vee}(v)$, where the Alexander duals of $\mathrm{dl}_L(v)$ and $L$ are taken over $V-\{v\}$ and $V$, respectively.
  \end{enumerate}
\end{proposition}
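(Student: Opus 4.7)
The plan is to prove all three parts by a direct unraveling of the definition of the Alexander dual, using only that complementation $\sigma\mapsto V-\sigma$ is an order-reversing involution on $2^V$ that swaps ``simplex of $L$'' with ``non-simplex of $L$''. Nothing homotopical or combinatorially substantial enters; each assertion reduces to rewriting a set-theoretic condition on either side of this involution. Parts (1) and (2) are essentially tautologies once the definitions are written down, and the only point requiring real care is the change of ground set in (3).

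For part (1), I would apply the defining condition twice: $\tau\in(L^\vee)^\vee$ iff $V-\tau\notin L^\vee$ iff $V-(V-\tau)=\tau\in L$, whence the two complexes coincide. For part (2), I would translate the two defining conditions for $\sigma$ to be a facet of $L$ --- namely $\sigma\in L$ and $\sigma\cup\{v\}\notin L$ for every $v\in V-\sigma$ --- through the involution $\sigma\leftrightarrow V-\sigma$. Using the identities $V-(V-\sigma)=\sigma$ and $V-(\sigma\cup\{v\})=(V-\sigma)-\{v\}$, these two conditions become $V-\sigma\notin L^\vee$ together with $(V-\sigma)-\{v\}\in L^\vee$ for every $v\in V-\sigma$, which is exactly the statement that $V-\sigma$ is a minimal non-face of $L^\vee$ (by heredity of $L^\vee$ it suffices to check the codimension-one subsets in place of the usual ``every proper subset is a face'' condition).

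For part (3), the delicate point is that $\mathrm{dl}_L(v)^\vee$ is formed over the smaller ground set $V-\{v\}$, while $L^\vee$ is formed over $V$. Given $\tau\subset V-\{v\}$, the complement taken inside $V-\{v\}$ is $(V-\{v\})-\tau=V-(\tau\cup\{v\})$, and because this set does not contain $v$, belonging to $\mathrm{dl}_L(v)$ is the same as belonging to $L$. Hence $\tau\in\mathrm{dl}_L(v)^\vee$ iff $V-(\tau\cup\{v\})\notin L$ iff $\tau\cup\{v\}\in L^\vee$, which, together with $v\notin\tau$, is precisely the condition $\tau\in\mathrm{lk}_{L^\vee}(v)$. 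No step presents a genuine obstacle; the only pitfall is conflating the two ground sets, which is exactly why the statement bothers to spell out over which ground set each Alexander dual is to be taken.
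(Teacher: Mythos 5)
Your argument is correct: all three parts follow by the direct definitional unraveling you give, and the one genuinely delicate point (the change of ground set in (3), where $(V-\{v\})-\tau=V-(\tau\cup\{v\})$ avoids $v$ so that membership in $\mathrm{dl}_L(v)$ and in $L$ coincide) is handled properly, as is the reduction in (2) to codimension-one subsets via heredity of $L^\vee$. The paper itself gives no proof, deferring to \cite{IK3}, and your verification is exactly the standard argument one would find there.
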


The following is proved in \cite{IK3}.

\begin{proposition}
  \label{collapsible}
  If $K$ is collapsible, then $|K^\vee|$ is contractible.
\end{proposition}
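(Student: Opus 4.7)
I would proceed by induction on the length of a collapsing sequence $K = K_0 \searrow K_1 \searrow \cdots \searrow K_N = \{v\}$ for $K$, showing at each step that the operation on Alexander duals is itself an elementary collapse in the opposite direction. The base case is the dual of a single vertex $\{v\}$: directly from the definition, $\{v\}^\vee$ consists of all subsets of $V$ other than $V$ and $V-\{v\}$, which coincides with the closed star of $v$ in $\Delta^V$, hence is a cone with apex $v$ and so contractible.

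For the inductive step, suppose $K \searrow K' = K \setminus \{\sigma,\tau\}$ is an elementary collapse in which $\tau$ is a facet of $K$ and $\sigma$ is a free face of $\tau$ of codimension one, so that $\tau$ is the unique simplex of $K$ properly containing $\sigma$. Since passing from $K$ to $K'$ deletes only the faces $\sigma$ and $\tau$, the definition of Alexander dual at once gives
\[
(K')^\vee = K^\vee \cup \{V-\sigma,\ V-\tau\},
\]
with $V-\tau \subsetneq V-\sigma$ of codimension one. The crucial claim is that $(V-\tau, V-\sigma)$ is a free pair in $(K')^\vee$, so that $(K')^\vee \searrow K^\vee$. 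To verify it I would translate through complementation $\mu \leftrightarrow V-\mu$: a proper subset of $V-\sigma$ different from $V-\tau$ corresponds to a proper superset of $\sigma$ different from $\tau$, which cannot belong to $K$ by the free-face property, so it lies in $K^\vee$; conversely, a simplex of $(K')^\vee$ strictly containing $V-\tau$ corresponds to a proper subset of $\tau$ not in $K'$, and the only such subset is $\sigma$, forcing $\mu = V-\sigma$.

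Assembling the induction yields a sequence of simplicial collapses $K_N^\vee \searrow K_{N-1}^\vee \searrow \cdots \searrow K_0^\vee = K^\vee$ starting at the contractible complex $K_N^\vee$, and since simplicial collapses induce deformation retracts on geometric realizations, $|K^\vee|$ inherits contractibility. I expect the main obstacle to lie in the bookkeeping of the inductive step: one must simultaneously confirm that $(K')^\vee$ is obtained from $K^\vee$ by adjoining only the two faces $V-\sigma$ and $V-\tau$ (so that $(K')^\vee$ is a genuine simplicial complex at this stage), and that no other simplex of $(K')^\vee$ strictly contains $V-\tau$. Both points rest on the uniqueness clause in the definition of a free face, and once they are secured the remainder of the argument is formal.
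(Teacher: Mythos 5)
Your argument is correct, and it is essentially the standard proof of this fact: the paper itself gives no proof here but defers to \cite{IK3}, where the same dualization is used --- an elementary collapse $K\searrow K'=K\setminus\{\sigma,\tau\}$ corresponds under $\mu\mapsto V-\mu$ to the elementary expansion $(K')^\vee=K^\vee\cup\{V-\sigma,V-\tau\}$ with $V-\tau$ a free face, so a collapsing sequence for $K$ dualizes to a sequence of expansions building $K^\vee$ from the contractible complex $\{v\}^\vee=v*\partial\Delta^{V-\{v\}}$. The only (harmless) slips are that $\{v\}^\vee$ is the closed star of $v$ in $\partial\Delta^V$ rather than in $\Delta^V$, and that, as with all such Alexander-duality statements, the degenerate case $K=\Delta^V$ (whose dual is the void complex) is implicitly excluded.
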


Then by Proposition \ref{dictionary} one gets:

\begin{corollary}
  \label{deletable}
  Dual (totally) deletable complexes are (totally) fillable.
\end{corollary}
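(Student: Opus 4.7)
The plan is to translate both parts of the corollary across Alexander duality, combining Proposition~\ref{dictionary} (which swaps facets with minimal non-faces and deletions with links) with Proposition~\ref{collapsible} (which turns collapsibility into contractibility of the dual).

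For the non-totally version, suppose $K^\vee$ is deletable via facets $\sigma_1,\ldots,\sigma_r$, so that $L:=K^\vee-\{\sigma_1,\ldots,\sigma_r\}$ is collapsible. By Proposition~\ref{dictionary}(2), each $\sigma_i^\vee:=[m]-\sigma_i$ is a minimal non-face of $K$, so these are legitimate candidates for a filling. A short element chase using Proposition~\ref{dictionary}(1) gives the identity $L^\vee=K\cup\sigma_1^\vee\cup\cdots\cup\sigma_r^\vee$, since $\tau\in L^\vee$ iff $[m]-\tau\notin L$, which splits as either $[m]-\tau\notin K^\vee$ (equivalently $\tau\in K$) or $[m]-\tau\in\{\sigma_1,\ldots,\sigma_r\}$ (equivalently $\tau=\sigma_i^\vee$ for some $i$). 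Proposition~\ref{collapsible} applied to $L$ then says $|L^\vee|$ is contractible, so $\{\sigma_1^\vee,\ldots,\sigma_r^\vee\}$ is a filling of $K$.

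For the totally version, the goal is to show $K_I$ is fillable for every non-empty $I\subset[m]$. By the non-totally case applied to $K_I$ on its own vertex set $I$, it suffices to show $(K_I)^\vee_I$ is deletable. Iterating Proposition~\ref{dictionary}(3) one vertex at a time, with Alexander duals taken over successively smaller ground sets, yields the identity $(K_I)^\vee_I=\mathrm{lk}_{K^\vee}([m]-I)$ of simplicial complexes on $I$. The problem therefore reduces to showing that for every $S\subset[m]$ the subset-link $\mathrm{lk}_{K^\vee}(S)$ is deletable.

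This last step is the main obstacle, because the totally deletable hypothesis on $K^\vee$ directly supplies deletability only for links of \emph{single} vertices in full subcomplexes. The plan to close this gap is an induction on $|S|$, strengthened to the assertion that $\mathrm{lk}_{K^\vee}(S)$ is itself totally deletable for every $S$. The base case $|S|=0$ is the hypothesis on $K^\vee$; for the inductive step, writing $S=S'\cup\{v\}$ and using $\mathrm{lk}_{K^\vee}(S)=\mathrm{lk}_{\mathrm{lk}_{K^\vee}(S')}(v)$, the problem reduces to showing that the link of a single vertex in a totally deletable complex is again totally deletable. The latter is handled via the two commutation identities $(L_J)_{J'}=L_{J\cap J'}$ and $(\mathrm{lk}_L(v))_J=\mathrm{lk}_{L_{J\cup\{v\}}}(v)$, which let one rewrite the defining conditions of totally deletable for $\mathrm{lk}_L(v)$ as single-vertex-link conditions on full subcomplexes of $L$, to which the hypothesis on $L$ directly applies.
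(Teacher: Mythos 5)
Your first paragraph (the unbracketed statement) is correct and is exactly the intended argument: Proposition~\ref{dictionary}(1),(2) identify $L^\vee$ with $K\cup\sigma_1^\vee\cup\cdots\cup\sigma_r^\vee$, and Proposition~\ref{collapsible} applied to the collapsible complex $L=K^\vee-\{\sigma_1,\ldots,\sigma_r\}$ makes this union contractible, so the $\sigma_i^\vee$ form a filling. Your reduction of the ``totally'' statement, via iterating Proposition~\ref{dictionary}(3) to get $(K_I)^\vee=\mathrm{lk}_{K^\vee}([m]-I)$, is also the right translation: what one needs is that every iterated link of $K^\vee$ is deletable.

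The gap is in the step meant to bridge this with the stated definition of totally deletable. You claim the identity $(\mathrm{lk}_L(v))_J=\mathrm{lk}_{L_{J\cup\{v\}}}(v)$ rewrites the defining conditions of total deletability for $\mathrm{lk}_L(v)$ as single-vertex-link conditions on full subcomplexes of $L$. It does not: the condition for $\mathrm{lk}_L(v)$ is that $\mathrm{lk}_{(\mathrm{lk}_L(v))_J}(w)$ be deletable, and substituting the identity yields $\mathrm{lk}_{\mathrm{lk}_{L_{J\cup\{v\}}}(v)}(w)$, which is the link of the \emph{edge} $\{v,w\}$ in $L_{J\cup\{v\}}$ (its faces $\sigma$ must satisfy $\sigma\cup\{v,w\}\in L$, not merely $\sigma\cup\{v\}\in L$ with $\sigma$ in a prescribed vertex set). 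A link of an edge is not a full subcomplex of a link of a vertex, so the hypothesis on $L$ does not apply and your induction does not close. A second issue of the same kind: for $I=[m]$ you need $\mathrm{lk}_{K^\vee}(\emptyset)=K^\vee$ itself to be deletable, which the literal definition of totally deletable (constraining only links of vertices) does not assert. The property that should be carried through is precisely deletability of all iterated links $\mathrm{lk}_{K^\vee}(\tau)$ including $\tau=\emptyset$ --- this is what the paper's chain actually produces for shellable complexes, since links of vertices of shellable complexes are shellable and hence, iterating, so are links of faces. With that reading of ``totally deletable'' the corollary follows at once from your first two paragraphs, and the detour through full subcomplexes of links (which is where the argument breaks) is unnecessary.
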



\subsection{Shellable complex}

Recall that $K$ is called shellable if there is an ordering of facets $\sigma_1,\ldots,\sigma_k$ of $K$, called a shelling, such that $\langle \sigma_1,\ldots,\sigma_{i-1}\rangle\cap\langle \sigma_i\rangle$ is pure and $(|\sigma_i|-2)$-dimensional for $i=2,\ldots,k$, where $\langle\tau_1,\ldots,\tau_r\rangle$ means the simplicial complex with facets $\tau_1,\ldots,\tau_r$. Shellable complexes were originally introduced as a combinatorial criterion for Cohen-Macaulayness and are now one of the most important class of simplicial complexes.

\begin{example}
\label{eg_shellable}
Any skeleton of a simplex is a shellable complex, where any ordering of its facets is a shelling.
\end{example}

As is seen in \cite{BW,IK3}, if $K$ is shellable, then $K$ is deletable and the link of any of its vertices is shellable. Then by Proposition \ref{dictionary} we get the following.

\begin{proposition}
  Shellable complexes are totally deletable.
\end{proposition}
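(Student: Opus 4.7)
The plan is to reduce the proposition to the statement that every full subcomplex of a shellable complex is deletable, and then to prove that reduced claim by restricting a shelling.

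First, I would observe the set-theoretic identity
\[
\mathrm{lk}_{K_{[m]-S}}(v) = (\mathrm{lk}_K(v))_{[m]-S-\{v\}},
\]
which follows immediately from the definitions of link and full subcomplex: both sides equal $\{\sigma\subset[m]-S-\{v\} : \sigma\cup\{v\}\in K\}$. Combined with the cited fact that $\mathrm{lk}_K(v)$ is shellable whenever $K$ is, the task reduces to showing that every full subcomplex $L_J$ of a shellable complex $L$ is deletable.

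To establish this reduced claim, my strategy is to exhibit $L_J$ as itself non-pure shellable, after which the cited fact that shellable complexes are deletable finishes the job. Given a shelling $\sigma_1,\ldots,\sigma_n$ of $L$, the plan is to form the induced sequence of traces $\sigma_1\cap J,\ldots,\sigma_n\cap J$, discard any entry contained in an earlier one, and argue that what remains is a shelling of $L_J$. A cleaner way to package the same argument is through the interval decomposition $L = \bigsqcup_i [R_i,\sigma_i]$ supplied by the shelling, where $R_i = \{v\in\sigma_i : \sigma_i-\{v\}\in\langle\sigma_1,\ldots,\sigma_{i-1}\rangle\}$ is the restriction face; this decomposition restricts to $L_J = \bigsqcup_{R_i\subset J}[R_i,\sigma_i\cap J]$, which encodes a non-pure shelling of $L_J$.

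The main obstacle is verifying the non-pure shelling condition for the restricted sequence: that $\langle\tau_1,\ldots,\tau_{i-1}\rangle\cap\langle\tau_i\rangle$ is pure of dimension $|\tau_i|-2$ at each step. This requires a case analysis based on how the restriction faces $R_i$ and the facets $\sigma_j$ meet $J$, and is the technical heart of the argument. Once it is settled, applying the reduced claim to $L=\mathrm{lk}_K(v)$ and $J=[m]-S-\{v\}$ gives the proposition.
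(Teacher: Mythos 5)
Your opening identity $\mathrm{lk}_{K_{[m]-S}}(v)=(\mathrm{lk}_K(v))_{[m]-S-\{v\}}$ is correct, but the claim you reduce everything to --- that every full subcomplex of a shellable complex is again shellable (hence deletable) --- is false, and this is where the argument breaks. Let $L$ be the path graph with facets $\{1,2\},\{2,5\},\{5,3\},\{3,4\}$; this order is a shelling, yet $L_{\{1,2,3,4\}}$ is the disjoint union of the edges $\{1,2\}$ and $\{3,4\}$, which is neither shellable nor deletable: removing any subset of its facets leaves a disconnected or discrete complex, never a collapsible one. Your interval decomposition $L_J=\bigsqcup_{R_i\subset J}[R_i,\sigma_i\cap J]$ is indeed a partition into intervals (here the restriction faces are $\emptyset,\{5\},\{3\},\{4\}$, and the surviving intervals for $J=\{1,2,3,4\}$ have tops $\{1,2\},\{3\},\{3,4\}$), but a partition into intervals does not encode a shelling without further hypotheses, and in this example no shelling exists. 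So the ``technical heart'' you defer cannot be carried out, even after discarding redundant traces.

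The paper's proof takes a different and much shorter route that never touches full subcomplexes: it uses only the two facts (from Bj\"orner--Wachs and \cite{IK3}) that a shellable complex is deletable and that the link of a vertex of a shellable complex is shellable, and iterates. What this establishes is that every iterated link $\mathrm{lk}_K(S\cup\{v\})=\mathrm{lk}_{\mathrm{lk}_K(S)}(v)$ of a shellable complex is deletable --- and that is also what is actually needed downstream, since under Alexander duality one has $(K_I)^\vee=\mathrm{lk}_{K^\vee}([m]-I)$ (a link of a face of the dual, not a link inside a full subcomplex), which is what feeds into Corollary~\ref{deletable}. Note that under your literal reading of ``totally deletable'' the proposition itself would fail: the cone $v*L$ over the path above is shellable, but $\mathrm{lk}_{(v*L)_{[m]-\{5\}}}(v)$ is the two disjoint edges, which is not deletable. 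The condition should be understood as deletability of iterated links, and with that reading the paper's two cited facts already give the proof; your restriction-of-a-shelling machinery is both unnecessary and unsalvageable as stated.
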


By Corollary \ref{deletable}, we obtain:

\begin{corollary}
  Dual shellable complexes are totally fillable.
\end{corollary}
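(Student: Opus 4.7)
The plan is essentially formal: simply chain together the two results displayed immediately before the statement. Suppose $K$ is dual shellable, i.e.\ its Alexander dual $K^\vee$ is shellable. By the preceding Proposition, $K^\vee$ is then totally deletable, which is exactly the statement that $K$ is dual totally deletable. Applying Corollary \ref{deletable}, which says that dual (totally) deletable complexes are (totally) fillable, one concludes that $K$ is totally fillable. So the proof is a one-line unwinding of definitions.

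The substance of the argument has been packaged into the two inputs. On the ``shellable $\Rightarrow$ totally deletable'' side, one uses that a shelling $\sigma_1,\ldots,\sigma_k$ of $K$ allows $K-\{\sigma_k\}=\langle\sigma_1,\ldots,\sigma_{k-1}\rangle$ to be collapsed along the intersection data specified by the shelling, giving deletability, together with the classical fact (used in \cite{BW,IK3}) that the link of any vertex in a shellable complex is again shellable. An easy induction on $|S|$, with these two facts, then yields deletability of $\mathrm{lk}_{K_{[m]-S}}(v)$ for all $S$ and $v$, i.e.\ total deletability. On the ``dual totally deletable $\Rightarrow$ totally fillable'' side, Corollary \ref{deletable} rests on Proposition \ref{collapsible} combined with the dictionary of Proposition \ref{dictionary}, which translates facets and deletions into minimal non-faces and links under Alexander duality.

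Because both inputs are already in hand, there is essentially no obstacle at this step; the only thing one needs to be careful about is that the prefix ``dual'' commutes with ``totally'' in the expected way, i.e.\ that $K$ being dual totally deletable really does unfold to $K^\vee$ being totally deletable, which is just the definition used throughout Section 2.
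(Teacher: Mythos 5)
Your proposal is correct and matches the paper's own reasoning: the corollary is obtained by chaining the preceding Proposition (shellable implies totally deletable) with Corollary \ref{deletable} (dual (totally) deletable implies (totally) fillable), and the paper itself gives no further proof beyond invoking these two results. Your additional sketch of why the two inputs hold is consistent with the paper's account in the surrounding text but is not part of the argument needed here.
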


\begin{example}
\label{eg_dual_shellable}
A skeleton of a simplex is shellable as in Example \ref{eg_shellable}, and its Alexander dual is again a skeleton of a simplex which is obviously totally fillable.
\end{example}

\begin{example}
  Let $K$ be the following simplicial complex with six vertices.
  \begin{center}
  \begin{tikzpicture}[thick]
    \path[draw,fill][fill=gray!50!white,draw=black](0,0)--(1,0.7)--(0,1.4)--(0,0);
    \draw(1,0.7)--(1.7,0.7);
    \path[draw,fill][fill=gray!50!white,draw=black](1.7,0.7)--(2.7,1.4)--(2.7,0)--(1.7,0.7);
    \fill[black](0,0)circle(2pt);
    \fill[black](1,0.7)circle(2pt);
    \fill[black](0,1.4)circle(2pt);
    \fill[black](1.7,0.7)circle(2pt);
    \fill[black](2.7,1.4)circle(2pt);
    \fill[black](2.7,0)circle(2pt);
  \end{tikzpicture}
\end{center}
Then $K$ is collapsible, so it is deletable. Moreover, for any vertex $v$, $\mathrm{lk}_K(v)$ is either the interval graph or the disjoint union of the interval graph and one point. Then $\mathrm{lk}_K(v)$ is shellable for any vertex $v$, implying that $K$ is totally deletable. However, $K$ is not shellable obviously so that the class of dual totally deletable complexes is strictly larger than that of dual shellable complexes.
\end{example}



\subsection{Directed MF-complex}

In the previous work \cite{GT2}, Grbi\'c and Theriault introduced a simplicial complex called a directed MF-complex and studied the map $\widetilde{w}$ for a directed MF-complex $K$. A simplicial complex $K$ is called a directed MF-complex if for a collection of minimal non-faces $\{\sigma_1,\ldots,\sigma_r\}$, there is a filtration of subcomplexes $\emptyset=K_1\subset K_2\subset\cdots\subset K_r=K$ such that $K_i=K_{i-1}\cup\partial\sigma_i$ and $K_{i-1}\cap\partial\sigma_i$ is a face, where $\partial\sigma$ means the boundary of a simplex $\sigma$.

\begin{example}
\label{eg_totally_fillable}
The $k$-skeleton of an $n$-dimensional simplex is a directed MF-complex unless $k=n$.
\end{example}

\begin{proposition}
  Directed MF-complexes are dual shellable.
\end{proposition}

We shall show that directed MF-complexes are dual shellable. For this, we will use the following lemma.

\begin{lemma}
  \label{shelling}
  Suppose that there is an ordering of minimal non-faces $\sigma_1<\cdots<\sigma_r$ of $K$ such that for any $i<j$, there is $k<j$ satisfying that $\sigma_k\cup\sigma_j\subset\sigma_i\cup\sigma_j$ and $|\sigma_k\cup\sigma_j|=|\sigma_j|+1$. Then the ordering of facets $\sigma_1^\vee<\cdots<\sigma_r^\vee$ of $K^\vee$ is a shelling.
\end{lemma}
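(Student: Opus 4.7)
My plan is to translate the hypothesis verbatim across Alexander duality into the standard ``exchange'' form of the shellability condition. Recall that the definition used in this paper---that $\langle\sigma_1,\ldots,\sigma_{i-1}\rangle\cap\langle\sigma_i\rangle$ is pure and $(|\sigma_i|-2)$-dimensional---is equivalent to the following combinatorial condition on an ordering of facets $\tau_1<\cdots<\tau_r$: for every pair $i<j$, there exists $k<j$ with $\tau_i\cap\tau_j\subset\tau_k\cap\tau_j$ and $|\tau_k\cap\tau_j|=|\tau_j|-1$. I will verify this criterion for the ordering $\tau_j:=\sigma_j^\vee$, which by Proposition \ref{dictionary}(2) is a complete enumeration of the facets of $K^\vee$.

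The verification is then a direct set-theoretic computation. Since $\sigma_a^\vee=[m]-\sigma_a$, De Morgan gives $\sigma_a^\vee\cap\sigma_b^\vee=[m]-(\sigma_a\cup\sigma_b)$, and hence $|\sigma_a^\vee\cap\sigma_b^\vee|=m-|\sigma_a\cup\sigma_b|$ and $|\sigma_j^\vee|=m-|\sigma_j|$. Under these identities, the inclusion $\sigma_i^\vee\cap\sigma_j^\vee\subset\sigma_k^\vee\cap\sigma_j^\vee$ is equivalent to $\sigma_k\cup\sigma_j\subset\sigma_i\cup\sigma_j$, while the codimension-one condition $|\sigma_k^\vee\cap\sigma_j^\vee|=|\sigma_j^\vee|-1$ is equivalent to $|\sigma_k\cup\sigma_j|=|\sigma_j|+1$. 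These are exactly the two conditions supplied by the hypothesis, so the exchange property holds for $\sigma_1^\vee<\cdots<\sigma_r^\vee$, and this ordering is a shelling of $K^\vee$.

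The only step requiring real care is the appeal to the equivalence between the ``pure codimension-one intersection'' and ``exchange'' formulations of shellability. This equivalence is classical (e.g.\ in the work of Bj\"orner--Wachs on non-pure shellability), but it should be invoked explicitly, since the paper uses the former definition while the dual translation works most cleanly with the latter. Beyond that I anticipate no obstacle: the remainder of the argument is pure set-complement arithmetic.
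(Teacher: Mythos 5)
Your proof is correct and follows essentially the same route as the paper's: Alexander duality converts unions of minimal non-faces to intersections of dual facets, and the hypothesis translates verbatim into the statement that for all $i<j$ there is $k<j$ with $\sigma_i^\vee\cap\sigma_j^\vee\subset\sigma_k^\vee\cap\sigma_j^\vee$ and $|\sigma_k^\vee\cap\sigma_j^\vee|=|\sigma_j^\vee|-1$. The only surface difference is presentational: you first invoke the Bj\"orner--Wachs ``exchange'' reformulation of shellability as a named equivalence and then match the translated condition to it, whereas the paper obtains the same translated condition and simply observes directly that it forces $\langle\sigma_1^\vee,\ldots,\sigma_{j-1}^\vee\rangle\cap\langle\sigma_j^\vee\rangle$ to be pure of dimension $m-|\sigma_j|-2$ (since every $\sigma_i^\vee\cap\sigma_j^\vee$, a proper subset of $\sigma_j^\vee$, sits inside some codimension-one $\sigma_k^\vee\cap\sigma_j^\vee$). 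Both are valid; your version simply routes through an explicitly named classical lemma rather than re-deriving the one implication needed on the spot.
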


\begin{proof}
  The assumption is equivalent to that for any $i<j$, there is $k<j$ satisfying that $\sigma_k^\vee\cap\sigma_j^\vee\supset\sigma_i^\vee\cap\sigma_j^\vee$ and $\sigma_k^\vee\cap\sigma_j^\vee$ is $(m-|\sigma_j|-2)$-dimensional. Then we get that for $j\ge 2$, $\langle \sigma_1^\vee,\ldots,\sigma_{j-1}^\vee\rangle\cap\langle\sigma_j^\vee\rangle$ is pure and $(m-|\sigma_j|-2)$-dimensional, completing the proof.
\end{proof}

\begin{proposition}
  Directed MF-complexes are dual shellable.
\end{proposition}

\begin{proof}
  Let $K$ be a directed MF-complex such that an ordering of minimal non-faces $\sigma_1<\cdots<\sigma_r$ gives the directed MF-complex structure of $K$. Let $I$ be the set of all 1-dimensional minimal non-faces of $K$ and put $\{\tau_1,\ldots,\tau_s\}=\{\sigma_1,\ldots,\sigma_r\}-I$, where $\tau_1<\cdots<\tau_s$. We consider the lexicographic ordering on $I$ such that $\{k,l\}<\{k',l'\}\in I$ if $k<k'$ or $k=k',l<l'$. Then the ordering $I<\tau_1<\cdots<\tau_s$ satisfies the condition of Lemma \ref{shelling}, where $I\sqcup\{\tau_1,\ldots,\tau_s\}$ is the set of all minimal non-faces of $K$. Thus the proof is done.
\end{proof}

Summarizing, we have obtained the implications:
$$\text{directed MF}\quad\Rightarrow\quad\text{dual shellable}\quad\Rightarrow\quad\text{dual totally deletable}\quad\Rightarrow\quad\text{totally fillable}$$




\subsection{Homotopy type}

It is observed in \cite{IK3} that if $K$ is fillable, then $|\Sigma K|$ is homotopy equivalent to a wedge of spheres. Here we consider the naturality of this homotopy equivalence which will be used later.

\begin{proposition}
\label{totally fillable}
If $K$ is fillable with a filling $\F(K)$, then there is a homotopy equivalence
$$|\Sigma K|\simeq\bigvee_{\sigma\in\F(K)}S^{|\sigma|-1}$$
such that for a fillable subcomplex $L$ of $K$ with a filling $\F(L)$ satisfying $\F(L)\subset\F(K)\cup K$, the square diagram
$$\xymatrix{|\Sigma L|\ar[r]^(.38)\simeq\ar[d]&\bigvee_{\tau\in\F(L)}S^{|\tau|-1}\ar[d]^{\widetilde{g}}\\
|\Sigma K|\ar[r]^(.38)\simeq&\bigvee_{\sigma\in\F(K)}S^{|\sigma|-1}}$$
commutes, where $\widetilde{g}\vert_{S^{|\tau|-1}}$ is the inclusion for $\tau\in\F(K)\cap\F(L)$ and the constant map otherwise.
\end{proposition}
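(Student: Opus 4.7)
The plan is to realize the wedge decomposition as the cofiber of the inclusion of $|K|$ into the contractible space obtained by filling $K$. Write $\F(K)=\{\sigma_1,\ldots,\sigma_r\}$ and let $K'=K\cup\sigma_1\cup\cdots\cup\sigma_r$, so that $|K'|$ is contractible by the fillable hypothesis. Since each $\sigma_i$ is a minimal non-face of $K$, all of its proper faces already lie in $K$, and therefore $|K'|$ is obtained from $|K|$ by attaching a single cell of dimension $|\sigma_i|-1$ along $\partial\sigma_i\hookrightarrow|K|$ for each $i$. Hence the quotient CW pair satisfies
$$|K'|/|K|\;\cong\;\bigvee_{i=1}^{r}\sigma_i/\partial\sigma_i\;\cong\;\bigvee_{\sigma\in\F(K)}S^{|\sigma|-1}.$$

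Next I would apply the Puppe sequence
$$|K|\to|K'|\to|K'|/|K|\to\Sigma|K|\to\Sigma|K'|$$
associated with the cofibration $|K|\hookrightarrow|K'|$. Because $|K'|$ is contractible, so is $\Sigma|K'|$, and the connecting map $|K'|/|K|\to\Sigma|K|$ is a homotopy equivalence. Composing its inverse with the identification above yields the desired homotopy equivalence $|\Sigma K|=\Sigma|K|\simeq\bigvee_{\sigma\in\F(K)}S^{|\sigma|-1}$.

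For the naturality statement, the plan is to set $L'=L\cup\F(L)$ and exploit the hypothesis $\F(L)\subset\F(K)\cup K$: for each $\tau\in\F(L)$ either $\tau\in\F(K)$, in which case $\tau$ is a simplex of $K'$, or $\tau\in K\subset K'$. Thus $L'$ is a subcomplex of $K'$ and the inclusion $L\hookrightarrow K$ extends to an inclusion of CW pairs $(|L'|,|L|)\hookrightarrow(|K'|,|K|)$. Naturality of the Puppe sequence, together with the step above applied both to $K$ and to $L$, produces the required commutative square with $\widetilde{g}$ identified as the induced map on the quotients $|L'|/|L|\to|K'|/|K|$.

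Finally I would read off $\widetilde{g}$ summand by summand, using that minimal non-faces are disjoint from their ambient complex, so $\F(K)\cap K=\emptyset$ and the decomposition $\F(L)=(\F(L)\cap\F(K))\sqcup(\F(L)\cap K)$ is unambiguous. For $\tau\in\F(L)\cap\F(K)$ the cell $\tau\subset|L'|$ is carried to the same cell $\tau\subset|K'|$ which survives in $|K'|/|K|$, giving the inclusion of the $\tau$-summand $S^{|\tau|-1}$; for $\tau\in\F(L)\cap K$ the cell $\tau$ already sits in $|K|$, hence is collapsed in $|K'|/|K|$, yielding the constant map. I do not expect a genuine obstacle here: the argument is a standard cofiber-sequence naturality, and the only mild point to check is that the inclusion $L'\subset K'$ is well defined as a subcomplex inclusion, which follows directly from the hypothesis on $\F(L)$.
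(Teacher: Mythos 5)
Your proposal is correct and is essentially the paper's own argument: both form $\overline{K}=K\cup\bigcup_{\sigma\in\F(K)}\sigma$, use its contractibility to identify $\Sigma|K|$ with $|\overline{K}|/|K|=\bigvee_{\sigma\in\F(K)}S^{|\sigma|-1}$ (the paper via a homotopy equivalence $C|K|\to|\overline{K}|$ restricting to the identity on $|K|$ and then pinching $|K|$, you via the Puppe sequence of the cofibration $|K|\hookrightarrow|\overline{K}|$ --- the same standard fact in two guises), and both derive the naturality square from the observation that $\F(L)\subset\F(K)\cup K$ exactly means $\overline{L}$ is a subcomplex of $\overline{K}$.
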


\begin{proof}
Put $\overline{K}=K\bigcup_{\sigma\in\F(K)}\sigma$. Since $\overline{K}$ is contractible, there is a homotopy equivalence $CK\to\overline{K}$ which restricts to the identity map of $K$. Then we get the desired homotopy equivalence by pinching $K$ to a point. The assumption on $L$ is equivalent to that $\overline{L}$ is a subcomplex of $\overline{K}$, so one gets the commutative square in the statement.
\end{proof}


\subsection{Contraction ordring}

We define a contraction ordering of vertices of a fillable complex. Let $V$ be a finite set and $S\subset V$ be a subset with $|S|\ge 2$. Let $L$ be a simplicial complex on the vertex set $V$ obtained by attaching trees $T_1,\ldots,T_k$ to $\partial\Delta^S$  by their roots, where $\Delta^S$ is the full simplex on the vertex set $S$ and $\partial\Delta^S$ is its boundary. Let $V_i$ be the vertex set of $T_i$ and $r_i\in V_i\cap S$ be the root of $T_i$. Then one has $V=S\sqcup(V_1-r_1)\sqcup\cdots\sqcup(V_k-r_k)$. An ordering $v_1<\cdots<v_n$ of $V_i-r_i$ is called a local contraction ordering if the full subcomplex $(T_i)_{V_i-\{v_j,\ldots,v_n\}}$ is connected for any $j=1,\ldots,n$. An ordering of $V-S$ is called a contraction ordering if it is the join of local contraction orderings of $V_i-r_i$. Note that a deformation retract of $|L|$ onto $|\partial\Delta^S|$ is  given by a contraction ordering.

For a finite set $V$ and its non-empty subset $S$, let $\Delta(V,S)$ be the simplicial complex which is the disjoint union of $\partial\Delta^S$ and vertices $v\in V-S$.

\begin{proposition}
If $K$ is fillable and $\sigma\in\F(K)$, then there are trees $T_1,\ldots,T_k$ such that there is a subcomplex of $\overline{K}$ obtained by attaching $T_1,\ldots,T_k$ to $\partial\Delta^\sigma$ by their roots.
\end{proposition}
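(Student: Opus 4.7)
The plan is to build the required subcomplex of $\overline{K}$ by combining $\partial\Delta^\sigma$ with a spanning-forest construction. First, I would observe that since $\sigma$ is a minimal non-face of $K$, every proper subset of $\sigma$ belongs to $K$, so $\partial\Delta^\sigma$ is already a subcomplex of $\overline{K}$; this handles the boundary-simplex part of the sought subcomplex, and there is nothing to do if $[m]-\sigma$ is empty.

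Next, I would look at the induced subgraph of the $1$-skeleton of $\overline{K}$ on the vertex set $[m]-\sigma$, take any spanning forest $F$, and list its connected components $F_1,\ldots,F_k$ with vertex sets $C_1,\ldots,C_k$ partitioning $[m]-\sigma$. Because $\overline{K}$ is contractible its $1$-skeleton is connected, so for each $j$ there is at least one edge of $\overline{K}$ joining some vertex $v_j\in C_j$ to some vertex $r_j\in\sigma$. I would pick one such edge $e_j=\{r_j,v_j\}$ for every $j$ and set $T_j=F_j\cup e_j$ with root $r_j$. Each $T_j$ is then a tree whose vertex set $V_j=C_j\cup\{r_j\}$ meets $\sigma$ in the single vertex $r_j$, and the $V_j-r_j=C_j$ partition $[m]-\sigma$ exactly as the attaching data in the definition demands.

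Finally I would let $L=\partial\Delta^\sigma\cup T_1\cup\cdots\cup T_k$ and verify it is a subcomplex of $\overline{K}$ of the prescribed form: every simplex of $\partial\Delta^\sigma$ lies in $K\subset\overline{K}$, every edge of each $T_j$ lies in $\overline{K}$ by the choice of $F$ and of $e_j$, and $T_j\cap\partial\Delta^\sigma=\{r_j\}$, so $L$ is precisely $\partial\Delta^\sigma$ with the trees $T_1,\ldots,T_k$ attached at their roots. I do not foresee a serious obstacle here: the only place where the fillability hypothesis really enters is through the connectedness of $\overline{K}$, which is needed to produce the connecting edges $e_j$, and this connectedness is immediate from the contractibility of $\overline{K}$ provided by the filling $\mathcal{F}(K)$.
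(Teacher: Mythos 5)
Your argument is correct and is essentially the paper's argument: both rest on the connectedness of the $1$-skeleton of the contractible complex $\overline{K}$ and a spanning tree/forest construction. If anything, your variant is slightly more careful than the paper's (which takes one maximal tree of $\overline{K}$ and deletes the edges lying in $\partial\Delta^\sigma$): by building the spanning forest on $[m]-\sigma$ first and then adjoining exactly one bridging edge per component, you guarantee directly that each tree meets $\sigma$ in a single root vertex.
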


\begin{proof}
Choose any maximal tree of $\overline{K}$. Then since $\overline{K}$ is connected, the vertex set of $T$ is $[m]$. If we remove all edges of $\partial\Delta^\sigma$ from $T$, then we get a collection of trees which gives the desired subcomplex.
\end{proof}

Then we can define a contraction ordering of $[m]-\sigma$ for a fillable complex $K$ and $\sigma\in\F(K)$.


\section{Polyhedral products and the map $w$}


\subsection{Polyhedral product}

Let $(\underline{X},\underline{A})=\{(X_i,A_i)\}_{i=1}^m$ be a collection of pairs of spaces. The polyhedral product of $(\underline{X},\underline{A})$ associated with $K$ is defined in \cite{BBCG} as
$$Z(K;(\underline{X},\underline{A}))=\bigcup_{\sigma\in K}(\underline{X},\underline{A})^\sigma\subset\prod_{i=1}^mX_i$$
where $(\underline{X},\underline{A})^\sigma=Y_1\times\cdots\times Y_m$ such that $Y_i=X_i$ for $i\in\sigma$ and $Y_i=A_i$ for $i\not\in\sigma$. The most fundamental property of polyhedral products is the following which we will use implicitly. For $\emptyset\ne I\subset[m]$, let $(\underline{X}_I,\underline{A}_I)=\{(X_i,A_i)\}_{i\in I}$.

\begin{proposition}
For $\emptyset\ne I\subset[m]$, $Z(K_I;(\underline{X}_I,\underline{A}_I))$ is a retract of $Z(K;(\underline{X},\underline{A}))$.
\end{proposition}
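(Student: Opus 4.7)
The plan is to exhibit the retraction very explicitly, by producing a section--retraction pair induced from obvious maps on the ambient products. Let $p\colon\prod_{i=1}^m X_i\to\prod_{i\in I}X_i$ be the coordinate projection, and let $s\colon\prod_{i\in I}X_i\to\prod_{i=1}^m X_i$ be the map that fills in the missing coordinates by a chosen basepoint $*\in A_i$ for each $i\not\in I$ (so we implicitly use that every $A_i$ is pointed, which is the standing assumption in the polyhedral-product setting of this paper). On the ambient product, $p\circ s=\mathrm{id}$ by construction.

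The only real content is to verify that $p$ and $s$ restrict to maps between the two polyhedral products. For $p$, given any $\sigma\in K$ I would observe that
$$p\bigl((\underline{X},\underline{A})^\sigma\bigr)=(\underline{X}_I,\underline{A}_I)^{\sigma\cap I},$$
and since $\sigma\cap I$ is a simplex of the full subcomplex $K_I$, this lies in $Z(K_I;(\underline{X}_I,\underline{A}_I))$. For $s$, given any $\tau\in K_I$ I would observe that $s\bigl((\underline{X}_I,\underline{A}_I)^\tau\bigr)\subset(\underline{X},\underline{A})^\tau$ inside $\prod_{i=1}^m X_i$, because the inserted coordinates for $i\not\in I$ land in $A_i$ and the original coordinates for $i\in I$ already match the required factor. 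Since $\tau\in K_I\subset K$, this lies in $Z(K;(\underline{X},\underline{A}))$. Taking unions over $\sigma\in K$ and $\tau\in K_I$ respectively gives the restricted maps, and $p\circ s=\mathrm{id}$ on $Z(K_I;(\underline{X}_I,\underline{A}_I))$ is inherited from the ambient identity.

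There is essentially no obstacle here; the statement is a purely formal consequence of the definition, encoding the evident functoriality of polyhedral products in the simplicial complex (via inclusion of a full subcomplex) together with the naturality in the sequence of pairs (via coordinate projection and basepoint insertion). The only point that requires any care is the existence of a basepoint in each $A_i$ with $i\not\in I$, which is used to define the section~$s$; no deeper homotopy-theoretic input is needed.
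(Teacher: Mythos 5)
Your proof is correct, and it is precisely the standard argument: the paper states this proposition without proof (it is presented as ``the most fundamental property of polyhedral products''), relying on the reader to supply exactly the formal verification you give. Your observations that $p\bigl((\underline{X},\underline{A})^\sigma\bigr)=(\underline{X}_I,\underline{A}_I)^{\sigma\cap I}$ and $s\bigl((\underline{X}_I,\underline{A}_I)^\tau\bigr)\subset(\underline{X},\underline{A})^\tau$ are the right two checks, and your note about needing basepoints in the $A_i$ for $i\notin I$ is an appropriate caveat consistent with the paper's tacit convention of working with pointed pairs.
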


If all $(X_i,A_i)$ are $(D^2,S^1)$ (resp. $(\C P^\infty,*)$), the resulting polyhedral product is the moment-angle complex $\Z_K$ (resp. $DJ_K$). Hereafter, let $(\underline{X},*)=\{(X_i,*)\}_{i=1}^m$ be a collection of pointed spaces. We will generalize the map $\widetilde{w}\colon\Z_K\to DJ_K$ by the polyhedral products
$$\Z_K(\underline{X})=Z(K;(C\underline{X},\underline{X}))\quad\text{and}\quad DJ_K(\underline{X})=Z(K;(\underline{X},*))$$
which are generalization of $\Z_K$ and $DJ_K$, respectively, where $(C\underline{X},\underline{X})=\{(CX_i,X_i)\}_{i=1}^m$. Here we remark that the same notation $\Z_K(\underline{X})$ is used in \cite{GT2} to express a different polyhedral product $Z(K;(C\Omega\underline{X},\Omega\underline{X}))$, where $\Omega\underline{X}=\{\Omega X_i\}_{i=1}^m$.


\subsection{Decomposition of the map $\widetilde{w}$}

As in \cite{IK3}, there is a homotopy fibration
\begin{equation}
\label{fibration}
\Z_K(\Omega\underline{X})\xrightarrow{\widetilde{w}}DJ_K(\underline{X})\to\prod_{i=1}^mX_i
\end{equation}
which specializes to the homotopy fibration \eqref{fibration_toric}. We decompose the map $\widetilde{w}$ to clarify the point of our study.

Let $\Omega X_i\to PX_i\to X_i$ be the path-loop fibration. Then for each $i$, there is a pair of fibrations $(PX_i,\Omega X_i)\to(X_i^{[0,1]},PX_i)\to(X_i,X_i)$, where the second map is the evaluation at 1, and this induces a homotopy fibration
\begin{equation}
\label{path loop}
Z(K;(P\underline{X},\Omega\underline{X}))\to Z(K;(\underline{X}^{[0,1]},P\underline{X}))\to\prod_{i=1}^mX_i.
\end{equation}
 The maps $C\Omega X_i\to PX_i$, $(s,l)\mapsto [t\mapsto l((1-s)t)]$ and the evaluations $X_i^{[0,1]}\to X_i$ at 0 induce homotopy equivalences $Z(K;(P\underline{X},\Omega\underline{X}))\simeq\Z_K(\Omega\underline{X})$ and $Z(K;(\underline{X}^{[0,1]},P\underline{X}))\simeq DJ_K(\underline{X})$. Then by applying these homotopy equivalences to \eqref{path loop}, one gets the homotopy fibration \eqref{fibration}. Hence one gets the following. Let $w\colon\Z_K(\underline{X})\to DJ_K(\Sigma\underline{X})$ be the map induced from the pinch maps $CX_i\to\Sigma X_i$, where $\Sigma\underline{X}=\{\Sigma X_i\}_{i=1}^m$.

 \begin{proposition}
 \label{decomp w}
 The map $\widetilde{w}\colon\Z_K(\Omega\underline{X})\to DJ(\underline{X})$ is the composite of maps
 $$\Z_K(\Omega\underline{X})\xrightarrow{w}DJ_K(\Sigma\Omega\underline{X})\to DJ_K(\underline{X})$$
 where the second map is induced from the evaluation maps $\Sigma\Omega X_i\to X_i$.
 \end{proposition}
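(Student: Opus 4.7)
The plan is to realize $\widetilde{w}$ as the map of polyhedral products induced by a morphism of pairs $g_i\colon(C\Omega X_i,\Omega X_i)\to(X_i,*)$ (one for each $i$), and then to factor each $g_i$ through the quotient $(\Sigma\Omega X_i,*)$. The desired decomposition of $\widetilde{w}$ will then follow by functoriality of the polyhedral product construction in the pair argument.

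First I would trace through the zigzag of homotopy equivalences used to pass from the fibration \eqref{path loop} to \eqref{fibration}. The equivalence $C\Omega X_i\to PX_i$, $(s,l)\mapsto[t\mapsto l((1-s)t)]$, the inclusion $(PX_i,\Omega X_i)\hookrightarrow(X_i^{[0,1]},PX_i)$, and the evaluation-based equivalence $Z(K;(\underline{X}^{[0,1]},P\underline{X}))\simeq DJ_K(\underline{X})$ compose (after inverting the equivalences in the appropriate direction) to a morphism of pairs $g_i\colon(C\Omega X_i,\Omega X_i)\to(X_i,*)$ which, up to pair-homotopy and a reparametrization of the cone coordinate, is given by $(s,l)\mapsto l(s)$. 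Because the polyhedral product is functorial in the pair, $\widetilde{w}$ is homotopic to the map induced by the collection $\{g_i\}$.

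Next, each $g_i$ sends $\Omega X_i\subset C\Omega X_i$ to the basepoint of $X_i$ (since $l(0)=l(1)=*$ for every loop $l$), so it factors as
$$
(C\Omega X_i,\Omega X_i)\xrightarrow{\pi_i}(\Sigma\Omega X_i,*)\xrightarrow{e_i}(X_i,*),
$$
where $\pi_i$ is the pinch map collapsing $\Omega X_i$ (so that $C\Omega X_i/\Omega X_i=\Sigma\Omega X_i$), and $e_i\colon\Sigma\Omega X_i\to X_i$, $[s,l]\mapsto l(s)$, is the adjoint counit, i.e.\ the evaluation map. Applying the polyhedral product functor, the collection $\{\pi_i\}$ induces the map $w\colon\Z_K(\Omega\underline{X})\to DJ_K(\Sigma\Omega\underline{X})$ by the very definition of $w$, while the collection $\{e_i\}$ induces the evaluation map $DJ_K(\Sigma\Omega\underline{X})\to DJ_K(\underline{X})$. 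Composing these yields the asserted factorization of $\widetilde{w}$.

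The main obstacle is the first step: carefully verifying that the composition through the zigzag of equivalences reduces, up to pair-homotopy, to the morphism $g_i$ as described. This amounts to checking that the explicit formulas for the various equivalences in \eqref{path loop} and \eqref{fibration} match up correctly modulo a reparametrization of the interval coordinate. Once this identification is made, the remainder is a purely formal application of functoriality of the polyhedral product with respect to maps of pairs.
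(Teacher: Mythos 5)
Your proposal is correct and is essentially the paper's own argument: the paper derives Proposition \ref{decomp w} directly from the preceding construction, in which $\widetilde{w}$ is realized (via the zigzag through $Z(K;(\underline{X}^{[0,1]},P\underline{X}))$) as the map of polyhedral products induced by pairs $(C\Omega X_i,\Omega X_i)\to(X_i,*)$, and this visibly factors as the pinch to $\Sigma\Omega X_i$ followed by the evaluation. Your closing caveat about reparametrizing the cone/interval coordinates is exactly the level of care the paper itself leaves implicit, so nothing further is needed.
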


 Thus we study the map $w$ and apply its properties to understand the map $\widetilde{w}$. By definition, the map $w$ has the following naturality.

\begin{proposition}
\label{w naturality}
For a subcomplex $L$ of $K$ on the same vertex set $[m]$, the following diagram commutes.
$$\xymatrix{\Z_L(\underline{X})\ar[r]^w\ar[d]&DJ_L(\underline{X})\ar[d]\\
\Z_K(\underline{X})\ar[r]^w&DJ_K(\underline{X})}$$
\end{proposition}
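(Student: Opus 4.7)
The plan is to verify commutativity directly from the definitions, as no significant obstacle is expected; naturality of $w$ under simplicial inclusions is essentially built into the polyhedral product construction. The map $w$ on $\Z_K(\underline{X})$ is obtained by restricting the coordinate-wise product of pinch maps $\prod_{i=1}^m p_i \colon \prod_{i=1}^m CX_i \to \prod_{i=1}^m \Sigma X_i$, where each $p_i \colon CX_i \to \Sigma X_i$ sends $X_i \subseteq CX_i$ to the basepoint, so that the piece $(C\underline{X}, \underline{X})^\sigma$ of $\Z_K(\underline{X})$ lands in the piece $(\Sigma \underline{X}, *)^\sigma$ of the target. The same formula defines $w$ on $\Z_L(\underline{X})$. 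Meanwhile, the vertical inclusions arise from the simplicial inclusion $L \subseteq K$: on the moment-angle side they send each piece $(C\underline{X}, \underline{X})^\sigma$ with $\sigma \in L$ identically onto the same subspace, now reinterpreted as a piece in the union defining $\Z_K(\underline{X})$, and analogously on the Davis--Januszkiewicz side.

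The key observation is that the horizontal maps modify the coordinates without touching the simplicial indexing, whereas the vertical maps enlarge the indexing set without altering the coordinates. Thus for any $y \in (C\underline{X}, \underline{X})^\sigma$ with $\sigma \in L$, both routes around the square send $y$ to the same point $(p_1(y_1), \ldots, p_m(y_m))$, regarded as an element of the piece indexed by $\sigma$ inside the target over $K$. Hence the square commutes strictly on each piece, and therefore on the union. Conceptually, this is simply the functoriality of the polyhedral product bifunctor $Z(-;(-,-))$ in its first argument (with respect to simplicial inclusions) together with the fact that $w$ is constructed by applying a morphism of pairs $(C X_i, X_i) \to (\Sigma X_i, *)$ in each slot of the second argument, so the two slots are independent and commute automatically.
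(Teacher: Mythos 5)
Your proof is correct and matches the paper's approach: the paper states this proposition without a written proof, noting only that it holds ``by definition,'' and your argument simply spells out why---the pinch maps act coordinate-wise and are independent of the simplicial indexing, while the inclusion $L\subseteq K$ only enlarges the indexing and leaves coordinates fixed, so the square commutes strictly on each piece $(C\underline{X},\underline{X})^\sigma$.
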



\subsection{Higher Whitehead product}

Suppose that $K$ consists only of two vertices, where $m=2$. Then we have $\Z_K(\underline{X})=X_1*X_2$ and $DJ_K(\Sigma\underline{X})=\Sigma X_1\vee\Sigma X_2$ so that the map $w\colon\Z_K(\underline{X})\to DJ_K(\Sigma\underline{X})$ is the Whitehead product of the inclusions $\Sigma X_i\to DJ_K$ for $i=1,2$, where $X*Y$ means the join of spaces $X$ and $Y$.

Suppose next that $K=\partial\Delta^{[m]}$. Then we have $\Z_K(\underline{X})=X_1*\cdots*X_m$ and $DJ_K(\Sigma\underline{X})$ is the fat wedge of $\Sigma X_i$, which is the subspace of $\prod_{i=1}^m\Sigma X_i$ consisting of points $(x_1,\ldots,x_m)$ where at least one $x_i$ is the basepoint. Porter \cite{P} defined the universal higher Whitehead product of the inclusions $a_i\colon\Sigma X_i\to DJ_K(\Sigma\underline{X})$ for $i=1,\ldots,m$ by the map $w\colon\Z_K(\underline{X})\to DJ_K(\Sigma\underline{X})$ in this special case that $K$ is the boundary of $\Delta^{[m]}$.

We finally consider general $K$. Suppose that $\sigma\subset[m]$ is a minimal non-face of $K$. Then there is the inclusion $DJ_{\partial\Delta^\sigma}(\Sigma\underline{X}_\sigma)\to DJ_K(\Sigma\underline{X})$, where $\underline{X}_\sigma=\{X_i\}_{i\in\sigma}$. Let $a_i\colon\Sigma X_i\to DJ_K(\Sigma\underline{X})$ be the inclusion for $i=1,\ldots,m$. Then the higher Whitehead product of the inclusions $a_i$ for $i\in\sigma$ is defined as the composite $\Z_K(\underline{X}_\sigma)\xrightarrow{w}DJ_{\partial\Delta^\sigma}(\Sigma\underline{X}_\sigma)\to DJ_K(\Sigma\underline{X})$, which we denote by $w_\sigma$.


\section{Fat wedge filtration}


\subsection{Definition}

For a collection of pointed spaces $\underline{Y}=\{Y_i\}_{i=1}^m$, let $T^i(\underline{Y})$ be the subspace of $\prod_{i=1}^mY_i$ consisting of points $(y_1,\ldots,y_m)$ such that at least $m-i$ of $y_j$ are the basepoints, where $T^i(\underline{Y})$ are called the generalized fat wedge of $Y_i$. Put $\Z_K^i(\underline{X})=\Z_K(\underline{X})\cap T^i(C\underline{X})$. Then there is a filtration
$$*=\Z_K^0(\underline{X})\subset\Z_K^1(\underline{X})\subset\cdots\subset\Z_K^m(\underline{X})=\Z_K(\underline{X})$$
which we call the fat wedge filtration of $\Z_K(\underline{X})$. The fat wedge filtration of $\Z_K(\underline{X})$ is studied in \cite{IK3} and it is shown that the fat wedge filtration connects the homotopy type of $\Z_K(\underline{X})$ and the combinatorics of a simplicial complex $K$.


\subsection{Cone decomposition}

In \cite{IK3}, it is shown that if all $X_i$ are suspensions, then the fat wedge filtration of $\Z_K(\underline{X})$ behaves so well that it is a cone decomposition with explicitly described attaching maps. We recall this fact here. Let $\RZ_K$ be the polyhedral product $\Z_K(\underline{X})$ such that all $X_i$ are $S^0$, which we call the real moment-angle complex. We first recall from \cite{IK3} properties of the fat wedge filtration of $\RZ_K$. We denote the $i$-th filter of the fat wedge filtration of $\RZ_K$ by $\RZ_K^i$.

\begin{theorem}
\label{FWF RZ}
For any $\emptyset\ne I\subset[m]$, there is a map $\varphi_{K_I}\colon |K_I|\to\RZ_{K_I}^{|I|-1}$ satisfying the following properties:
\begin{enumerate}
\item $\RZ_K^i$ is obtained from $\RZ_K^{i-1}$ by attaching cones by $\varphi_{K_I}$ for $|I|=i$ so that
$$\RZ_K^i=\RZ_K^{i-1}\bigcup_{I\subset[m],\,|I|=i}C|K_I|.$$

\item Let $\widehat{K}_I$ be the simplicial complex obtained from $K_I$ by adding all of its minimal non-faces. Then $\varphi_{K_I}$ factors through the inclusion $|K_I|\to|\widehat{K}_I|$.
\end{enumerate}
\end{theorem}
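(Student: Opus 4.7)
My plan is to construct the attaching map $\varphi_{K_I}$ by an explicit ``max-weight'' formula on each simplex of $|K_I|$ and then realize the cone decomposition of $\RZ_{K_I}$ via a linear interpolation toward a distinguished point. I work with $\RZ_L\subset[0,1]^V$ by taking $CS^0=[0,1]$ based at the cone point $0$, so that $\RZ_L=\{(t_w)\in[0,1]^V:\{w:t_w\in(0,1)\}\in L\}$ and $\RZ_L^i$ consists of points with at most $i$ nonzero coordinates.

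For $x=\sum_{v\in\sigma}\alpha_v\,v\in|L|$ on the carrier simplex $\sigma\in L$, set
\[
\varphi_L(x)_w=1-\alpha_w/\alpha_{\max}\ \text{ for }w\in\sigma,\qquad \varphi_L(x)_w=1\ \text{ for }w\notin\sigma,
\]
with $\alpha_{\max}=\max_{v\in\sigma}\alpha_v$. The support of $\varphi_L(x)$ is the proper subface $\{w\in\sigma:\alpha_w<\alpha_{\max}\}\subsetneq\sigma$, which lies in $L$, and the vertices achieving $\alpha_{\max}$ produce coordinates $0$, so $\varphi_L(x)\in\RZ_L^{|V|-1}$. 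To exhibit the cone, I interpolate toward the point $(1,\ldots,1)\in\RZ_L$ by
\[
\Phi_L(x,s)=\varphi_L(x)+s\bigl(\mathbf{1}-\varphi_L(x)\bigr),
\]
which collapses $|L|\times\{1\}$ to a single point and therefore factors through $C|L|$. The support of $\Phi_L(x,s)$ is always a face of $L$ (a proper subface of $\sigma$ at $s=0$, $\sigma$ itself for $0<s<1$, and $\emptyset$ at $s=1$), so $\Phi_L$ lands in $\RZ_L$. Given any $y\in\RZ_L\setminus\RZ_L^{|V|-1}$, the unique preimage under $\Phi_L$ is determined by $\sigma=\{w:y_w<1\}$, $s=\min_w y_w$, and $\alpha_w\propto 1-y_w$; hence $\Phi_L$ is a continuous bijection $C|L|\to\RZ_L$, and by compactness a homeomorphism, giving $\RZ_L=\RZ_L^{|V|-1}\cup_{\varphi_L}C|L|$.

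To globalize, the zero-extension $\RZ_{K_I}\hookrightarrow\RZ_K$ (set $t_j=0$ for $j\notin I$) is well-defined because the $\RZ_K$-condition on the extended point reduces exactly to the $\RZ_{K_I}$-condition on its $I$-coordinates. A point of $\RZ_K^i$ with nonzero set $I$ then lies in the image of this embedding, giving $\RZ_K^i=\RZ_K^{i-1}\cup\bigcup_{|I|=i}\RZ_{K_I}$, and $\RZ_{K_I}^{|I|-1}\subset\RZ_K^{i-1}$; substituting the cone decomposition of each $\RZ_{K_I}$ yields (1). For (2), the same formula for $\varphi$ makes sense on the interior of any minimal non-face $\sigma$ of $K_I$, since the resulting support $\{w\in\sigma:\alpha_w<\alpha_{\max}\}$ is a proper subset of $\sigma$ and therefore lies in $K_I$ by the defining property of minimal non-faces; this extends $\varphi_{K_I}$ continuously to $|\widehat{K}_I|$. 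The main technical point is the bijectivity of $\Phi_L$ and its upgrade to a homeomorphism, which is transparent from the explicit inverse above; the remaining steps are formal.
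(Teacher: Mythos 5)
Your construction is correct, and it is in substance the same as the one this theorem is quoted from: the paper itself gives no proof here (Theorem \ref{FWF RZ} is recalled from [IK3]), and the standard argument there likewise realizes each cube $[0,1]^\sigma$ of $\RZ_{K_I}$ as a cone over the faces meeting the basepoint corner; your formula $\alpha\mapsto(1-\alpha_w/\alpha_{\max})_w$ together with the linear interpolation toward $\mathbf{1}$ is exactly an explicit coordinate version of that homeomorphism, and your verification of (2) via minimality of non-faces is the right one. Two small wording points. First, in the model $(CS^0,S^0)=([0,1],\{0,1\})$ the point $0$ is not the cone point but a point of $S^0$; this is actually what you need, since the fat wedge filtration is taken with respect to the basepoint of $S^0\subset CS^0$ (were the basepoint the genuine cone point, $\RZ_K^1$ would not be what you describe), so your filtration ``at most $i$ nonzero coordinates'' is the correct one despite the mislabel. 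Second, $\Phi_L$ is not a continuous bijection $C|L|\to\RZ_L$ ($\varphi_L$ is in general neither injective nor surjective onto $\RZ_L^{|V|-1}$); what your explicit inverse actually shows, and what you need, is that the induced map $\RZ_L^{|V|-1}\cup_{\varphi_L}C|L|\to\RZ_L$ is a continuous bijection from a compact space to a Hausdorff one, hence a homeomorphism. With those rephrasings the argument is complete.
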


The fat wedge filtration of $\Z_K(\underline{X})$ is not a cone decomposition in general unlikely to $\RZ_K$ in Theorem \ref{FWF RZ}. However, as mentioned above, it is a cone decomposition whenever all $X_i$ are suspensions. This fact is proved in \cite{IK3} only for the moment-angle complex $\Z_K$, but it is also proved in the general case by the same construction using higher Whitehead product. We demonstrate it here. Define a map $\widetilde{\Phi}\colon I^m\times\prod_{i=1}^mX_i\to\prod_{i=1}^mCX_i$ by $\widetilde{\Phi}(t_1,\ldots,t_m,x_1,\ldots,x_m)=((t_1,x_1),\ldots,(t_m,x_m))$. Then $\widetilde{\Phi}$ restricts to a map $\Phi\colon\RZ_K\times\prod_{i=1}^mX_i\to\Z_K(\underline{X})$ such that
$$\Phi^{-1}(\Z_K^{m-1}(\underline{X}))=(\RZ_K\times T^{m-1}(\underline{X}))\cup(\RZ_K^{m-1}\times\prod_{i=1}^mX_i).$$
If $\underline{X}=\Sigma\underline{Y}$ for $\underline{Y}=\{Y_i\}_{i=1}^m$, then there is the higher Whitehead product $\omega\colon Y^{*[m]}\to T^{m-1}(\underline{X})$, where $Y^{*[m]}=Y_1*\cdots*Y_m$. Now we define the map $\overline{\varphi}_K\colon|K|*Y^{*[m]}\to\Z_K^{m-1}(\underline{X})$ by the composite
\begin{multline*}
|K|*Y^{*[m]}=(C|K|\times Y^{*[m]})\cup(|K|\times C(Y^{*[m]}))\\
\xrightarrow{(C\varphi_K\times\omega)\cup(\varphi_K\times C\omega)}(\RZ_K\times T^{m-1}(\underline{X}))\cup(\RZ_K^{m-1}\times\prod_{i=1}^mX_i)\xrightarrow{\Phi}\Z_K^{m-1}(\underline{X}).
\end{multline*}

\begin{theorem}
\label{FWF Z}
If $\underline{X}=\Sigma\underline{Y}$, then the fat wedge filtration of $\Z_K(\underline{X})$ is a cone decomposition such that
$$\Z_K^i(\underline{X})=\Z_K^{i-1}(\underline{X})\bigcup_{I\subset[m],\,|I|=i}C(|K_I|*Y^{*I})$$
where the attaching maps are $\overline{\varphi}_{K_I}$.
\end{theorem}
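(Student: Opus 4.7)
The plan is to deduce Theorem \ref{FWF Z} from Theorem \ref{FWF RZ} for $\RZ_K$ together with the universal higher Whitehead product decomposition $\prod_{i=1}^m X_i = T^{m-1}(\underline{X}) \cup_\omega CY^{*[m]}$ that holds since $\underline{X}=\Sigma\underline{Y}$, by transporting the resulting product cone structure on $\RZ_K\times\prod_{i=1}^m X_i$ through the map $\Phi$ constructed just before the statement.

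First I would reduce to proving the top attachment, i.e.\ the case $|I|=m$: the identification $\Z_K(\underline{X})=\Z_K^{m-1}(\underline{X})\cup_{\overline{\varphi}_K}C(|K|*Y^{*[m]})$. For $|I|=i<m$, the cone indexed by $I$ lies in the image of the retract $\Z_{K_I}(\underline{X}_I)\subset\Z_K(\underline{X})$, so its attachment is precisely the top attachment applied to $K_I$ on the vertex set $I$. Since the interior of $C(|K_I|*Y^{*I})$ parametrizes points with exactly the coordinates indexed by $I$ non-basepoint, the cones corresponding to distinct $I$ of size $i$ attach to disjoint open parts of $\Z_K^i(\underline{X})-\Z_K^{i-1}(\underline{X})$, and can therefore be realized by a single parallel pushout.

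For the top attachment, Theorem \ref{FWF RZ} gives $\RZ_K=\RZ_K^{m-1}\cup_{\varphi_K}C|K|$ and the universal higher Whitehead product gives $\prod_{i=1}^m X_i=T^{m-1}(\underline{X})\cup_\omega CY^{*[m]}$. Multiplying these two mapping cone decompositions and using the standard identification $C|K|\times CY^{*[m]}\cong C(|K|*Y^{*[m]})$ with boundary $(C|K|\times Y^{*[m]})\cup(|K|\times CY^{*[m]})=|K|*Y^{*[m]}$, the product $\RZ_K\times\prod_{i=1}^m X_i$ is exhibited as $(\RZ_K\times T^{m-1}(\underline{X}))\cup(\RZ_K^{m-1}\times\prod_{i=1}^m X_i)$ with $C(|K|*Y^{*[m]})$ attached along $(C\varphi_K\times\omega)\cup(\varphi_K\times C\omega)$. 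Applying $\Phi$ and invoking the identity $\Phi^{-1}(\Z_K^{m-1}(\underline{X}))=(\RZ_K\times T^{m-1}(\underline{X}))\cup(\RZ_K^{m-1}\times\prod_{i=1}^m X_i)$ stated just before the theorem, together with the fact that $\Phi$ is injective on the complement of this preimage (no $t_i$ equals the basepoint of $I$ there, so no collapsing occurs), yields the pushout $\Z_K(\underline{X})=\Z_K^{m-1}(\underline{X})\cup_{\overline{\varphi}_K}C(|K|*Y^{*[m]})$ with attaching map exactly the $\overline{\varphi}_K$ defined in the excerpt.

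The main obstacle is to upgrade the set-theoretic pushout obtained above to a topological pushout through the surjection $\Phi$, since $\Phi$ is not itself a homeomorphism. This is handled by a standard compactness-Hausdorffness argument parallel to the one used in \cite{IK3} for the special case of the moment-angle complex $\Z_K$; the present generality introduces no new conceptual difficulty, as $\omega$ is built precisely to exhibit $\prod\Sigma Y_i$ as a cone on $Y^{*[m]}$ attached along its fat wedge.
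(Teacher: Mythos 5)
Your proof is correct and follows essentially the same approach as the paper, which likewise presents the construction of $\Phi$, $\omega$, and $\overline{\varphi}_K$ and then asserts the theorem as following by the same argument as the $\Z_K$ case in \cite{IK3}. You make the underlying product-of-cone-decompositions reasoning explicit (that $\RZ_K\times\prod X_i$ is literally $(\RZ_K^{m-1}\cup_{\varphi_K}C|K|)\times(T^{m-1}(\underline{X})\cup_\omega CY^{*[m]})$, pushed through $\Phi$), and you correctly identify and defer the quotient-map technicality, which is exactly the point the paper also leaves to \cite{IK3}; the only small imprecision is that injectivity of $\Phi$ off the preimage also requires $x_i\ne *$ for all $i$, not just $t_i$ away from the basepoint of $I$.
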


It is shown in \cite{IK3} that if $\varphi_{K_I}\simeq*$ for any $I$, then $\overline{\varphi}_{K_I}\simeq*$ for any $I$ as a consequence of a more general result. We will prove this fact by a more direct argument, which enables us to consider the naturality among null homotopies.

\begin{proposition}
\label{null homotopy}
If $\varphi_K$ is null homotopic, then so is $\overline{\varphi}_K$. Moreover, if a null homotopy of $\varphi_K$ restricts to that of $\varphi_L$ for a subcomplex $L\subset K$, then we may choose a null homotopy of $\overline{\varphi}_K$ such that it restricts to that of $\overline{\varphi}_L$.
\end{proposition}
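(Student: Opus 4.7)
The plan is, given a null homotopy $H\colon C|K|\to\RZ_K^{m-1}$ of $\varphi_K$, to produce a null homotopy of $\overline{\varphi}_K$ in two stages. First I null-homotope the modified map $\overline{\varphi}_K^H$, obtained from $\overline{\varphi}_K$ by replacing the extension $C\varphi_K$ with $H$ in its definition. Then, using the contractibility of $\RZ_K\subset I^m$ (which is star-shaped from $(1,\ldots,1)$), I show $\overline{\varphi}_K\simeq\overline{\varphi}_K^H$.

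For the first stage, I use the identification $C(|K|*Y^{*[m]})=C|K|*Y^{*[m]}$, which decomposes as
$$
C|K|*Y^{*[m]}=(CC|K|\times Y^{*[m]})\cup(C|K|\times CY^{*[m]})
$$
glued along $C|K|\times Y^{*[m]}$. Since $C|K|$ is contractible, $H$ admits an extension $CH\colon CC|K|\to\RZ_K^{m-1}$ that sends the outer cone point to the basepoint (e.g., $CH(c,s)=H(\kappa(c,1-s))$ for any contraction $\kappa$ of $C|K|$ to its cone point). Define
$$
\overline{H}\colon C|K|*Y^{*[m]}\to\Z_K^{m-1}(\underline{X})
$$
by $\Phi\circ(CH\times\omega)$ on $CC|K|\times Y^{*[m]}$ and $\Phi\circ(H\times C\omega)$ on $C|K|\times CY^{*[m]}$; both pieces land in $\Z_K^{m-1}(\underline{X})$ (via $\omega\in T^{m-1}(\underline{X})$ and $H\in\RZ_K^{m-1}$ respectively), and they agree on the overlap because $CH|_{C|K|}=H$. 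Restricted to $|K|*Y^{*[m]}\subset C(|K|*Y^{*[m]})$, $\overline{H}$ reproduces $\overline{\varphi}_K^H$, whence $\overline{\varphi}_K^H$ is null homotopic.

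For the second stage, since $\RZ_K$ is contractible, any two extensions of $\varphi_K\colon|K|\to\RZ_K$ over $C|K|$ are homotopic rel $|K|$, so there is a homotopy $G\colon C|K|\times I\to\RZ_K$ from $C\varphi_K$ to $H$ rel $|K|$. Then $\Phi\circ(G\times\omega)$, combined with the constant homotopy $\Phi\circ(\varphi_K\times C\omega)$ on $|K|\times CY^{*[m]}$, assembles into a homotopy $\overline{\varphi}_K\simeq\overline{\varphi}_K^H$ landing in $\Z_K^{m-1}(\underline{X})$ because $\omega(y)\in T^{m-1}(\underline{X})$ throughout. Concatenating with $\overline{H}$ completes the null homotopy of $\overline{\varphi}_K$.

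For the naturality clause, the straight-line contraction of $\RZ_K$ to $(1,\ldots,1)$ restricts to that of $\RZ_L\subset\RZ_K$, so if $H_L=H_K|_{C|L|}$ one can choose $CH_K$ and $G_K$ to restrict to $CH_L$ and $G_L$ on the $L$-side: the choice of $\kappa$ can be taken to restrict, and the homotopy $G$ can be built by first constructing $G_L$ and then solving a cofibration extension problem with target $\RZ_K$. The main obstacle lies in keeping both the cone-extension $CH$ and the rel-$|K|$ homotopy $G$ coherent with the subcomplex inclusion; this reduces to fixing a single restrictable contraction of $\RZ_K$, after which the downstream choices are forced.
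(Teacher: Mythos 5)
Your Stage 1 is fine: replacing the characteristic map $C\varphi_K$ by the null homotopy $H\colon C|K|\to\RZ_K^{m-1}$ and then extending $H$ over $CC|K|$ via a contraction of $C|K|$ does produce a null homotopy of the modified map $\overline{\varphi}_K^H$ landing in $\Z_K^{m-1}(\underline{X})$. The problem is Stage 2, and it is fatal.

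The claim that $\RZ_K\subset I^m$ is star-shaped from $(1,\ldots,1)$ is false. Writing $\RZ_K=\bigcup_{\sigma\in K}\prod_{i\in\sigma}I\times\prod_{i\notin\sigma}\{0,1\}$, take $x$ with $x_i=0$ for some $i\notin\sigma$: the straight-line segment from $x$ to $(1,\ldots,1)$ has $i$-th coordinate $1-t$, which leaves $\{0,1\}$ for $0<t<1$, so the segment exits $\RZ_K$. Indeed $\RZ_K$ is never contractible unless $K$ is a full simplex: e.g.\ $\RZ_{\partial\Delta^{m-1}}=\partial I^m\cong S^{m-1}$, and more generally any minimal non-face $\sigma$ of size $k$ gives a retract $\RZ_{K_\sigma}\cong S^{k-1}$ of $\RZ_K$.

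Consequently the homotopy $G\colon C\varphi_K\simeq H$ rel $|K|$ that your Stage 2 requires does not exist in general. The obstruction is precisely the map $f=C\varphi_K\cup_{|K|}H\colon|\Sigma K|\to\RZ_K$ that the paper constructs; under the hypothesis $\varphi_K\simeq*$ it exhibits $\Sigma|K|$ as a wedge summand of $\RZ_K\simeq\RZ_K^{m-1}\vee\Sigma|K|$, so $f$ is generically essential, and $C\varphi_K$ cannot be pushed rel $|K|$ into the $(m-1)$-skeleton. The paper's proof instead deforms $\overline{\varphi}_K$ to a map factoring through $f\wedge\omega=(f\wedge 1)\circ\Sigma(1_{|K|}\wedge\omega)$ and then invokes the nontrivial fact that $\Sigma\omega\simeq*$, i.e.\ the suspension of the universal higher Whitehead product vanishes. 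Your argument never uses $\Sigma\omega\simeq*$, which is a telling sign that something is amiss: this is the essential topological input, and a proof that bypasses it should be viewed with suspicion. As a further consequence, your naturality clause, built on the same straight-line contraction, does not stand either.
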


\begin{proof}
Suppose that $\varphi_K\simeq*$ and we fix a null homotopy. Then the map $(C\varphi_K\times\omega)\cup(\varphi_K\times C\omega)$ in the definition of $\overline{\varphi}_K$ is homotopic to the composite
\begin{multline*}
(C|K|\times Y^{*[m]})\cup(|K|\times C(Y^{*[m]}))\to(|\Sigma K|\times Y^{*[m]})\cup(*\times C(Y^{*[m]}))\\
\xrightarrow{(f\times\omega)\cup(*\times C\omega)}(\RZ_K\times T^{m-1}(\underline{X}))\cup(\RZ_K^{m-1}\times\prod_{i=1}^mX_i)
\end{multline*}
for a map $f\colon|\Sigma K|\to\RZ_K$ defined by gluing $C\varphi_K$ and the null homotopy of $\varphi_K$. Then for $\Phi(\RZ_K\vee\prod_{i=1}^mX_i)=*$, the map $\overline{\varphi}_K$ factors through the map $f\wedge\omega\colon|\Sigma K|\wedge Y^{*[m]}\to\RZ_K\wedge T^{m-1}(\underline{X})$. Note that $f\wedge\omega=(f\wedge 1_{Y^{*[m]}})\circ(1_{|\Sigma K|}\wedge\omega)=(f\wedge 1_{Y^{*[m]}})\circ\Sigma(1_{|K|}\wedge\omega)$. For $\Sigma\omega\simeq*$, one gets $\Sigma(1_{|K|}\wedge\omega)\simeq*$ so that $\overline{\varphi}_K\simeq*$ as desired. The naturality of null homotopies is obvious by the above deformation of maps.
\end{proof}


\subsection{Homotopy decomposition}

We apply Theorem \ref{FWF Z} to obtain a homotopy decomposition of $\Z_K(\underline{X})$ together with its naturality. To this end, we will use the following simple lemma, where the proof is easy and omitted.

\begin{lemma}
\label{decomp lem}
If a map $\varphi\colon A\to X$ is null homotopic, then there is a homotopy equivalence
$$\epsilon_\varphi\colon X\vee\Sigma A\xrightarrow{\simeq}X\cup_\varphi CA$$
which is natural with respect to $\varphi$ and its null homotopy.
\end{lemma}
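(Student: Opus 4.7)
The plan is to give an explicit description of $\epsilon_\varphi$ and then verify it is a homotopy equivalence via a homotopy-pushout argument, which also makes naturality transparent.

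First I would write a null homotopy of $\varphi$ as a map $H\colon CA\to X$ extending $\varphi\colon A\to X$ along the inclusion $A\hookrightarrow CA$. Regarding $\Sigma A$ as the double cone $C_-A\cup_A C_+A$, I would define $\epsilon_\varphi$ on $X$ as the canonical inclusion $X\hookrightarrow X\cup_\varphi CA$, on $C_+A\subset\Sigma A$ as the composite $C_+A\cong CA\hookrightarrow X\cup_\varphi CA$, and on $C_-A\subset\Sigma A$ as $H\colon CA\to X$ followed by the inclusion $X\hookrightarrow X\cup_\varphi CA$. These agree on $A$ because $H|_A=\varphi$, so they glue to a well-defined map $\epsilon_\varphi\colon X\vee\Sigma A\to X\cup_\varphi CA$.

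Next I would verify that $\epsilon_\varphi$ is a homotopy equivalence by viewing $X\cup_\varphi CA$ as the homotopy pushout of $X\xleftarrow{\varphi}A\to *$. The null homotopy $H$ provides a homotopy between $\varphi$ and the constant map $A\to X$, and homotopy pushouts are invariant under replacing one of the maps by a homotopic one. The pushout of $X\xleftarrow{*}A\to *$ is exactly $X\vee\Sigma A$, and tracing through the standard cylinder-interpolation argument shows the comparison equivalence is precisely $\epsilon_\varphi$. Alternatively, one can check directly that $\epsilon_\varphi$ induces an isomorphism on all homotopy groups (or, since the inclusion $X\hookrightarrow X\cup_\varphi CA$ is a cofibration, apply the gluing lemma to the square comparing the trivial cofibration $X\hookrightarrow X\cup_* CA=X\vee\Sigma A$ with $X\hookrightarrow X\cup_\varphi CA$).

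Finally, for naturality, suppose we are given a map of pairs $(A',\varphi')\to(A,\varphi)$ together with null homotopies $H'$, $H$ of $\varphi'$, $\varphi$ that are compatible in the obvious sense. Each ingredient of the construction of $\epsilon_\varphi$ — the inclusion of $X$, the two cones forming $\Sigma A$, and the null-homotopy cone $H$ — is strictly functorial in this data, so the square
\[
\xymatrix{X'\vee\Sigma A'\ar[r]^{\epsilon_{\varphi'}}\ar[d]&X'\cup_{\varphi'}CA'\ar[d]\\X\vee\Sigma A\ar[r]^{\epsilon_\varphi}&X\cup_\varphi CA}
\]
commutes on the nose. The only genuine content is Step 2, and the expected main obstacle is checking that the pushout-comparison equivalence really coincides (up to homotopy) with the map $\epsilon_\varphi$ built from the cone description of $\Sigma A$; this is a bookkeeping matter with the parameter on the cone coordinate.
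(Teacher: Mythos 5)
Your construction is correct and is precisely the standard argument the paper has in mind (the authors omit the proof as ``easy''): send one half-cone of $\Sigma A$ into $X\cup_\varphi CA$ via the cone inclusion $CA\hookrightarrow X\cup_\varphi CA$ and the other half into $X$ via the null homotopy $H$, and the gluing lemma comparing this pushout with $X\cup_* CA=X\vee\Sigma A$ shows it is an equivalence, strictly naturally in the pair $(\varphi,H)$. The only convention worth recording is that the null homotopy must be pointed, i.e.\ $H$ sends the cone point of $CA$ to the basepoint of $X$, so that $\epsilon_\varphi$ is well defined on the wedge.
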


By Theorem \ref{FWF Z}, Proposition \ref{null homotopy} and Lemma \ref{decomp lem}, one gets:

\begin{corollary}
\label{decomp general}
Suppose that $\underline{X}=\Sigma\underline{Y}$. If $\varphi_{K_I}\simeq*$ for any $\emptyset\ne I\subset[m]$, then there is a homotopy equivalence
$$\epsilon_K\colon\Z_K(\underline{X})\xrightarrow{\simeq}\bigvee_{\emptyset\ne I\subset[m]}|\Sigma K_I|\wedge\widehat{X}^I$$
where $\widehat{X}^I=\bigwedge_{i\in I}X_i$. Moreover, if $L$ is a subcomplex of $K$ on the vertex set $[m]$ such that a null homotopy of $\varphi_{K_I}$ restricts to that of $\varphi_{L_I}$ for any $\emptyset\ne I\subset[m]$, up to homotopy, then there is a homotopy commutative diagram
$$\xymatrix{\Z_L(\underline{X})\ar[r]^(.34){\epsilon_L}\ar[d]&\bigvee_{\emptyset\ne I\subset[m]}|\Sigma L_I|\wedge\widehat{X}^I\ar[d]\\
\Z_K(\underline{X})\ar[r]^(.34){\epsilon_K}&\bigvee_{\emptyset\ne I\subset[m]}|\Sigma K_I|\wedge\widehat{X}^I}$$
where the vertical arrows are inclusions.
\end{corollary}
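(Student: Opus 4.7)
\emph{Proof plan.} The strategy is to combine the cone decomposition from Theorem \ref{FWF Z} with the splitting lemma \ref{decomp lem}, which applies because all attaching maps are null homotopic by Proposition \ref{null homotopy}.

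First, from the hypothesis $\varphi_{K_I}\simeq*$ for every $\emptyset\ne I\subset[m]$, Proposition \ref{null homotopy} gives $\overline{\varphi}_{K_I}\simeq*$. So Theorem \ref{FWF Z} expresses $\Z_K^i(\underline{X})$ as $\Z_K^{i-1}(\underline{X})$ with cones attached along null-homotopic maps indexed by $I\subset[m]$, $|I|=i$. I would then proceed inductively on $i$: having already built a homotopy equivalence from $\Z_K^{i-1}(\underline{X})$ to a wedge of suspensions, apply Lemma \ref{decomp lem} once per attached cone to replace
$$\Z_K^i(\underline{X})=\Z_K^{i-1}(\underline{X})\bigcup_{|I|=i}C(|K_I|*Y^{*I})$$
by
$$\Z_K^{i-1}(\underline{X})\vee\bigvee_{|I|=i}\Sigma\bigl(|K_I|*Y^{*I}\bigr).$$
Iterating to $i=m$ yields a homotopy equivalence $\epsilon_K\colon\Z_K(\underline{X})\xrightarrow{\simeq}\bigvee_{\emptyset\ne I\subset[m]}\Sigma(|K_I|*Y^{*I})$.

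Second, I would identify each wedge summand. Using the standard homotopy equivalence $A*B\simeq\Sigma(A\wedge B)$ for well-pointed CW complexes, we have $\Sigma(A*B)\simeq\Sigma A\wedge\Sigma B$. Iterating this along $Y^{*I}=*_{i\in I}Y_i$ and using $\underline{X}=\Sigma\underline{Y}$,
$$\Sigma Y^{*I}\simeq\bigwedge_{i\in I}\Sigma Y_i=\bigwedge_{i\in I}X_i=\widehat{X}^I,$$
so $\Sigma(|K_I|*Y^{*I})\simeq|\Sigma K_I|\wedge\widehat{X}^I$. Substituting gives the claimed wedge decomposition.

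Finally, for naturality, the fat wedge filtration is respected by the inclusion $\Z_L(\underline{X})\hookrightarrow\Z_K(\underline{X})$, and the attaching maps $\overline{\varphi}_{L_I}$ are by construction the restrictions of $\overline{\varphi}_{K_I}$. Under the assumption that a null homotopy of $\varphi_{K_I}$ restricts to one of $\varphi_{L_I}$, the naturality clause in Proposition \ref{null homotopy} lets us pick compatible null homotopies of $\overline{\varphi}_{K_I}$ and $\overline{\varphi}_{L_I}$. The naturality of Lemma \ref{decomp lem} then propagates this compatibility through the inductive splitting, so the square in the statement commutes up to homotopy.

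The main obstacle is the naturality bookkeeping: at each stage of the induction, the equivalence $\epsilon_K$ on $\Z_K^i(\underline{X})$ must remain compatible, via the inclusion, with the already-constructed equivalence $\epsilon_L$ on $\Z_L^i(\underline{X})$. This reduces to the combined naturality of Proposition \ref{null homotopy} and Lemma \ref{decomp lem}, but requires that the null homotopies for $K$ and $L$ be chosen coherently before splitting — which is precisely the restriction hypothesis imposed on $L$.
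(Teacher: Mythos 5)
Your proposal is correct and follows exactly the route the paper takes: the paper derives this corollary directly from Theorem \ref{FWF Z}, Proposition \ref{null homotopy} and Lemma \ref{decomp lem}, splitting the cone decomposition of the fat wedge filtration inductively and using the equivalence $\Sigma(|K_I|*Y^{*I})\simeq|\Sigma K_I|\wedge\widehat{X}^I$, with naturality handled by the naturality clauses of those two results. No discrepancies.
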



\section{Main theorem}


\subsection{Naturality}

By Theorem \ref{FWF RZ}, we have:

\begin{lemma}
\label{totally fillable trivial}
If $K$ is totally fillable, then $\varphi_{K_I}\simeq*$ for any $\emptyset\ne I\subset[m]$.
\end{lemma}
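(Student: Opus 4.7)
The plan is to combine the factorization property in Theorem \ref{FWF RZ}(2) with the contractibility coming from the filling. Concretely, fix $\emptyset\neq I\subset[m]$ and let $\F(K_I)=\{\sigma_1,\ldots,\sigma_r\}$ be the chosen filling of $K_I$; by definition of fillable, the geometric realization of
$$\overline{K_I}=K_I\cup\sigma_1\cup\cdots\cup\sigma_r$$
is contractible. Each $\sigma_j$ is a minimal non-face of $K_I$, so $\overline{K_I}$ sits inside $\widehat{K_I}$ as a subcomplex.

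Next, Theorem \ref{FWF RZ}(2) tells us that $\varphi_{K_I}\colon|K_I|\to\RZ_{K_I}^{|I|-1}$ factors through the inclusion $|K_I|\to|\widehat{K_I}|$. But this inclusion itself factors as
$$|K_I|\hookrightarrow|\overline{K_I}|\hookrightarrow|\widehat{K_I}|,$$
and $|\overline{K_I}|$ is contractible. Hence $|K_I|\to|\widehat{K_I}|$ is null homotopic, and so is any composite extending it; in particular $\varphi_{K_I}\simeq *$.

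This is essentially a bookkeeping argument, so I do not expect a serious obstacle; the only thing one has to keep track of is that the filling $\F(K_I)$ consists of minimal non-faces of $K_I$ (not of $K$), so that $\overline{K_I}\subset\widehat{K_I}$ is a genuine containment of simplicial complexes on the vertex set $I$, and hence the factorization through $|\overline{K_I}|$ really makes sense at the level of polyhedral complexes.
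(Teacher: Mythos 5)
Your proof is correct and is exactly the argument the paper intends: the paper derives the lemma directly from Theorem \ref{FWF RZ}(2) by observing that the inclusion $|K_I|\to|\widehat{K}_I|$ factors through the contractible space $|\overline{K_I}|$ provided by the filling $\F(K_I)$. Your bookkeeping remark about $\F(K_I)$ consisting of minimal non-faces of $K_I$ is the right point to check, and it holds by the definition of totally fillable.
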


For a totally fillable complex $K$, we put
$$W_K(\underline{X})=\bigvee_{\emptyset\ne I\subset[m]}\bigvee_{\sigma\in\F(K_I)}\Sigma^{|\sigma|-1}\widehat{X}^I.$$
Then by Proposition \ref{totally fillable}, Corollary \ref{decomp general} and Lemma \ref{totally fillable trivial}, one gets the following homotopy decomposition.

\begin{theorem}
\label{decomp Z general}
If $K$ is totally fillable and $\underline{X}=\Sigma\underline{Y}$, then there is a homotopy equivalence
$$\epsilon_K\colon\Z_K(\underline{X})\xrightarrow{\simeq}W_K(\underline{X}).$$
\end{theorem}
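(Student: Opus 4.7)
The plan is to assemble Theorem~\ref{decomp Z general} from the three pieces already in place: Corollary~\ref{decomp general} (which produces a wedge decomposition of $\Z_K(\underline{X})$ out of the smashes $|\Sigma K_I|\wedge\widehat{X}^I$ whenever every $\varphi_{K_I}$ is null-homotopic), Lemma~\ref{totally fillable trivial} (which secures exactly that null-homotopy hypothesis under total fillability), and Proposition~\ref{totally fillable} (which decomposes each $|\Sigma K_I|$ as a wedge of spheres indexed by the filling $\F(K_I)$).

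First I would verify that the hypotheses of Corollary~\ref{decomp general} are satisfied: since $K$ is totally fillable, every full subcomplex $K_I$ is itself fillable, so Lemma~\ref{totally fillable trivial} immediately yields $\varphi_{K_I}\simeq *$ for every non-empty $I\subset[m]$. Together with the suspension hypothesis $\underline{X}=\Sigma\underline{Y}$ this allows me to invoke Corollary~\ref{decomp general} to obtain a homotopy equivalence
$$\epsilon_K\colon\Z_K(\underline{X})\xrightarrow{\simeq}\bigvee_{\emptyset\ne I\subset[m]}|\Sigma K_I|\wedge\widehat{X}^I.$$

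Next I would replace each wedge summand by its spherical model. For fixed $I$, Proposition~\ref{totally fillable} (applied to the fillable complex $K_I$ with the chosen filling $\F(K_I)$) provides a homotopy equivalence $|\Sigma K_I|\simeq\bigvee_{\sigma\in\F(K_I)}S^{|\sigma|-1}$; smashing with the (well-pointed) space $\widehat{X}^I$ yields
$$|\Sigma K_I|\wedge\widehat{X}^I\simeq\bigvee_{\sigma\in\F(K_I)}\Sigma^{|\sigma|-1}\widehat{X}^I.$$
Taking the wedge over all non-empty $I\subset[m]$ and composing with the equivalence $\epsilon_K$ above gives precisely the target $W_K(\underline{X})$.

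There is no real obstacle here because both ingredients have already done the heavy lifting; the only small care needed is making sure the smash of a wedge decomposition with $\widehat{X}^I$ is still an honest homotopy equivalence, which is standard for CW or well-pointed spaces, and that the composition of the two homotopy equivalences can be named $\epsilon_K$ without conflicting with the notation of Corollary~\ref{decomp general}. I would either reuse the symbol (justified by absorbing the wedge-of-spheres identification into the definition) or introduce an intermediate name; either way the proof reduces to two or three sentences once the bookkeeping is set up.
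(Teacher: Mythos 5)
Your proposal is correct and follows exactly the route the paper takes: the paper derives this theorem by citing precisely Proposition~\ref{totally fillable}, Corollary~\ref{decomp general}, and Lemma~\ref{totally fillable trivial}, and your write-up simply makes the same combination explicit. The bookkeeping points you raise (smashing a wedge decomposition with $\widehat{X}^I$, naming conventions for $\epsilon_K$) are handled implicitly in the paper and present no issue.
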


By putting $X_i=S^1$ for all $i$, one immediately gets Theorem \ref{decomp Z}. We consider the naturality of the homotopy equivalence $\epsilon_K$ for special subcomplexes of $K$. The following full subcomplex case is obvious by the construction of $\epsilon_K$.

\begin{corollary}
The homotopy equivalence $\epsilon_K$ retracts onto $\epsilon_{K_I}$ for any $\emptyset\ne I\subset[m]$.
\end{corollary}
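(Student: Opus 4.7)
The plan is to observe that the construction of $\epsilon_K$ in the proof of Theorem \ref{decomp Z general} is manifestly natural under the full subcomplex retract, so the statement really is ``obvious by construction'' as the author hints. First I would set up the two retracts involved. On the source side, the polyhedral product retract from Section 4.1 gives $r_\Z\colon\Z_K(\underline{X})\to\Z_{K_I}(\underline{X}_I)$ induced by collapsing the coordinates $j\in[m]\setminus I$ to the basepoint. On the target side, the identity $(K_I)_J=K_J$ for $J\subset I$ means
$$W_{K_I}(\underline{X}_I)=\bigvee_{\emptyset\ne J\subset I}\bigvee_{\sigma\in\F(K_J)}\Sigma^{|\sigma|-1}\widehat{X}^J$$
is literally a sub-wedge of $W_K(\underline{X})$, so there is an obvious projection $r_W\colon W_K(\underline{X})\to W_{K_I}(\underline{X}_I)$ collapsing all summands indexed by $J\not\subset I$.

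Next I would trace the inductive construction of $\epsilon_K$ along the fat wedge filtration of Theorem \ref{FWF Z}. At stage $|J|=i$ one splits off the summand $|\Sigma K_J|\wedge\widehat{X}^J$ by applying Lemma \ref{decomp lem} to a null-homotopy of $\overline{\varphi}_{K_J}$ produced by Proposition \ref{null homotopy}, which in turn descends from a null-homotopy of $\varphi_{K_J}$ furnished by total fillability via Lemma \ref{totally fillable trivial} and Proposition \ref{totally fillable}. For $J\subset I$, the equality $(K_I)_J=K_J$ forces the attaching maps and their chosen null-homotopies to agree with those used in constructing $\epsilon_{K_I}$, so the naturality clause of Lemma \ref{decomp lem} (and of Proposition \ref{null homotopy}) yields a homotopy commutative square at that stage. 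For $J\not\subset I$, the attached cell $C(|K_J|\ast Y^{\ast J})$ involves coordinates outside $I$ essentially and is collapsed to the basepoint by $r_\Z$, matching the collapse of the corresponding wedge summand under $r_W$.

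Iterating this compatibility stage by stage, exactly as in the proof of Corollary \ref{decomp general}, then produces a homotopy commutative diagram
$$\xymatrix{\Z_K(\underline{X})\ar[r]^{\epsilon_K}\ar[d]_{r_\Z}&W_K(\underline{X})\ar[d]^{r_W}\\
\Z_{K_I}(\underline{X}_I)\ar[r]^{\epsilon_{K_I}}&W_{K_I}(\underline{X}_I)}$$
which is precisely the assertion that $\epsilon_K$ retracts onto $\epsilon_{K_I}$. The only real obstacle is cosmetic: Corollary \ref{decomp general} is stated for subcomplex inclusions on a common vertex set $[m]$, while here we pass between vertex sets $[m]$ and $I$. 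This is resolved by noting that $r_\Z$ is compatible with the fat wedge filtration in the exact sense needed, namely it collapses every cell indexed by $J\not\subset I$ to the basepoint and restricts to the identity on cells indexed by $J\subset I$, so the stage-by-stage argument of Corollary \ref{decomp general} transports verbatim to the retract setting.
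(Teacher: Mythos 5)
Your argument is correct and fleshes out exactly what the paper means by calling this ``obvious by the construction of $\epsilon_K$'': the cone decomposition of Theorem \ref{FWF Z} is indexed by subsets $J$, the stage-$J$ data (attaching map, null-homotopy, split-off summand) for $J\subset I$ is identical for $K$ and for $K_I$ because $(K_I)_J=K_J$, and the coordinate-projection retraction respects the fat wedge filtration. This is the same stage-by-stage naturality argument the paper relies on; you have just written it out explicitly, including the correct observation that one cannot literally invoke Corollary \ref{decomp general} (which requires a common vertex set) but that the same inductive reasoning transports to the retract setting.

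One small point worth tightening if you were to write this up formally: for $J\not\subset I$ the cell $C(|K_J|*Y^{*J})$ is not exactly ``collapsed to the basepoint'' by $r_\Z$; rather, $r_\Z$ sends it into the lower filtration stage $\Z_{K_I}^{|I\cap J|}(\underline{X}_I)$, where it contributes nothing new. The conclusion is the same, but the mechanism is absorption into earlier stages rather than literal collapse.
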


We have the following naturality too.

\begin{corollary}
\label{naturality Delta}
Suppose that $K$ is totally fillable and $\underline{X}=\Sigma\underline{Y}$. The homotopy equivalence $\epsilon_K$ satisfies that for $\sigma\in\F(K)$, the square diagram
$$\xymatrix{\Z_{\Delta([m],\sigma)}(\underline{X})\ar[r]\ar[d]^{\epsilon_{\Delta([m],\sigma)}}&\Z_K(\underline{X})\ar[d]^{\epsilon_K}\\
W_{\Delta([m],\sigma)}(\underline{X})\ar[r]^g&W_K(\underline{X})}$$
is homotopy commutative, where $g$ restricts to the identity map of $\Sigma^{|\sigma|-1}\widehat{X}^{[m]}$.
\end{corollary}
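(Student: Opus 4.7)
The plan is to apply Corollary \ref{decomp general} to the inclusion $\Delta([m],\sigma)\hookrightarrow K$ and then identify the resulting map $g$ on the summand $\Sigma^{|\sigma|-1}\widehat{X}^{[m]}$ using the naturality furnished by Proposition \ref{totally fillable}. First, I verify that $\Delta([m],\sigma)$ is totally fillable with fillings compatible with those of $K$. For each $\emptyset\ne I\subset[m]$, the complex $\Delta([m],\sigma)_I$ is either $\partial\Delta^\sigma$ (if $\sigma\subset I$) or the simplex $\Delta^{I\cap\sigma}$ (otherwise), taken disjointly with the isolated vertices of $I-\sigma$; its minimal non-faces are precisely $\sigma$ (when $\sigma\subset I$) together with edges crossing components.

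Since $K_I$ is fillable it is connected; moreover, because every proper subset of a minimal non-face is a face, the $1$-skeleton of $\overline{K_I}=K_I\cup\F(K_I)$ coincides with that of $K_I$ and is therefore connected. Hence enough edges of $K_I$ can be chosen that, together with $\sigma$ when $\sigma\subset I$, form a filling of $\Delta([m],\sigma)_I$. Assuming the fillings $\F(K_I)$ have been chosen so that $\sigma\in\F(K_I)$ whenever $\sigma\subset I$---possible because $\sigma$ is then a minimal non-face of $K_I$ and $\sigma\in\F(K)$---this filling is contained in $\F(K_I)\cup K_I$, so $\overline{\Delta([m],\sigma)_I}$ becomes a contractible subcomplex of $\overline{K_I}$. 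Consequently, the null homotopy of $\varphi_{K_I}$ arising from the contractibility of $|\overline{K_I}|$ (via the factorization in Theorem \ref{FWF RZ}) restricts to a null homotopy of $\varphi_{\Delta([m],\sigma)_I}$.

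With these compatible null homotopies in hand, Corollary \ref{decomp general} produces the desired homotopy commutative square, and the map $g$ is assembled from the inclusions $|\Sigma\Delta([m],\sigma)_I|\wedge\widehat{X}^I\to|\Sigma K_I|\wedge\widehat{X}^I$ composed with the wedge decompositions of Proposition \ref{totally fillable}. For the summand indexed by $(I,\tau)=([m],\sigma)$, since $\sigma$ lies in both $\F(\Delta([m],\sigma))$ and $\F(K)$, Proposition \ref{totally fillable} identifies the induced map on the $S^{|\sigma|-1}$ summand as the inclusion, and smashing with $\widehat{X}^{[m]}$ yields the identity on $\Sigma^{|\sigma|-1}\widehat{X}^{[m]}$, as required. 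The main technical obstacle I anticipate is precisely the bookkeeping in the previous paragraph---ensuring $\sigma\in\F(K_I)$ for every $I\supset\sigma$---which may force us to adapt the chosen fillings (and hence $\epsilon_K$) to the given $\sigma\in\F(K)$.
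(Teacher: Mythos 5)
Your argument follows the paper's proof essentially verbatim: both realize $\overline{\Delta([m],\sigma)_I}$ (with a filling consisting of $\sigma$, when $\sigma\subset I$, together with edges of $\overline{K_I}$) as a contractible subcomplex of $\overline{K_I}$, so that the contraction of $|\overline{K_I}|$ can be chosen compatibly with a contraction of the subcomplex, and then invoke Corollary \ref{decomp general} together with Proposition \ref{totally fillable} to identify $g$ on the summand $\Sigma^{|\sigma|-1}\widehat{X}^{[m]}$. The only slips are cosmetic: a fillable complex need not be connected, and the $1$-skeleton of $\overline{K_I}$ may properly contain that of $K_I$ (it acquires the $1$-dimensional minimal non-faces of the filling), but since those extra edges still lie in $\F(K_I)\cup K_I$ your choice of filling for $\Delta([m],\sigma)_I$ and the resulting compatibility of null homotopies go through unchanged.
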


\begin{proof}
The null homotopy of $\varphi_{K_I}$ is given by the contraction of $|\overline{K_I}|$ which is homotopic to a contraction of $|\overline{\Delta(I,J)}|$ given by a contraction ordering, where $J=\sigma\cap I$. Then the corollary follows from Corollary \ref{decomp general}.
\end{proof}

For $k<m$, put $\widetilde{Z}(k)=(\Z_{\Delta([m-1],[k])}(\underline{X}_{[m-1]})\times X_m)\cup(*\times CX_m)$ and $\widetilde{Z}^i(k)=\widetilde{Z}(k)\cap T^i(C\underline{X})$. Then the following is clear from the definition.

\begin{proposition}
\label{Z(k)}
If $\underline{X}=\Sigma\underline{Y}$, than for each $\emptyset\ne I\subset[m]$, the map $\overline{\varphi}_{\Delta([m],[k])_I}$ restricts to a map $\widetilde{\varphi}_I\colon|\Delta([m-1],[k])_I|*Y^{*I}\to\widetilde{Z}^{|I|-1}(k)$ such that
$$\widetilde{Z}^i(k)=\widetilde{Z}^{i-1}(k)\bigcup_{I\subset[m],\,|I|=i}C(|\Delta([m-1],[k])_I|*Y^{*I})$$
where the attaching maps are $\widetilde{\varphi}_I$.
\end{proposition}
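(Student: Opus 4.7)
The plan is to realise $\widetilde{Z}(k)$ as a subspace of the polyhedral product $\Z_{\Delta([m],[k])}(\underline{X})$ and to obtain the cone decomposition of its fat wedge filtration by restricting the one supplied by Theorem~\ref{FWF Z}. The first observation is that $\Delta([m],[k])$ splits as the disjoint union of $\Delta([m-1],[k])$ (regarded on the vertex set $[m-1]$) and the isolated vertex $\{m\}$, giving the decomposition
\[
\Z_{\Delta([m],[k])}(\underline{X})=\bigl(\Z_{\Delta([m-1],[k])}(\underline{X}_{[m-1]})\times X_m\bigr)\cup\Bigl(\prod_{i<m}X_i\times CX_m\Bigr).
\]
Since $\{*\}\times CX_m\subset\prod_{i<m}X_i\times CX_m$, this exhibits $\widetilde{Z}(k)$ as a subspace of $\Z_{\Delta([m],[k])}(\underline{X})$, and the fat wedge filtration is compatible: $\widetilde{Z}^i(k)=\widetilde{Z}(k)\cap\Z_{\Delta([m],[k])}^i(\underline{X})$.

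Next, for each $\emptyset\ne I\subset[m]$ I would trace the attaching map $\overline{\varphi}_{\Delta([m],[k])_I}$ through the map $\Phi$ to identify its restriction to $\widetilde{Z}(k)$. If $m\notin I$, then $\Delta([m-1],[k])_I=\Delta([m],[k])_I$ and unwinding the construction of $\overline{\varphi}_{\Delta([m],[k])_I}$ shows that the entire cell $C(|\Delta([m],[k])_I|*Y^{*I})$ sits inside $\Z_{\Delta([m-1],[k])}(\underline{X}_{[m-1]})\times\{*\}\subset\widetilde{Z}(k)$, so $\widetilde{\varphi}_I=\overline{\varphi}_{\Delta([m],[k])_I}$ on the nose. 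If $m\in I$, then $|\Delta([m],[k])_I|=|\Delta([m-1],[k])_I|\sqcup\{m\}$, and the join decomposes as $|\Delta([m],[k])_I|*Y^{*I}=(|\Delta([m-1],[k])_I|*Y^{*I})\cup_{Y^{*I}}CY^{*I}$. Tracing through $\Phi$ shows that the component of the cell corresponding to the isolated vertex $\{m\}$ is carried into $\prod_{i<m}X_i\times CX_m$---the very piece of $\Z_{\Delta([m],[k])}(\underline{X})$ replaced by $\{*\}\times CX_m$ in $\widetilde{Z}(k)$---while the component corresponding to $|\Delta([m-1],[k])_I|$ lands in $\Z_{\Delta([m-1],[k])}(\underline{X}_{[m-1]})\times X_m\subset\widetilde{Z}(k)$, yielding the required restriction $\widetilde{\varphi}_I$.

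Finally I would assemble the restricted cones into the cone decomposition claimed in the proposition, comparing with the pointwise stratification
\[
\widetilde{Z}^i(k)=\bigl(\Z_{\Delta([m-1],[k])}^{\,i}(\underline{X}_{[m-1]})\times\{*\}\bigr)\cup\bigl(\Z_{\Delta([m-1],[k])}^{\,i-1}(\underline{X}_{[m-1]})\times X_m\bigr)\cup\bigl(\{*\}\times CX_m\bigr)
\]
valid for $i\ge 1$: the cells indexed by $I\subset[m-1]$ with $|I|=i$ fill in the first summand via Theorem~\ref{FWF Z} applied to $\Delta([m-1],[k])$, while the cells indexed by $I\ni m$ with $|I|=i$ account for the passage from the $(i-1)$-st level to the $i$-th level along the $X_m$ direction. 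The main obstacle is the bookkeeping in the case $m\in I$: one must verify via direct inspection of the factorisation of $\overline{\varphi}$ through $\Phi$ and the higher Whitehead product $\omega$ that the join decomposition above separates cleanly under the attaching map, so that the restricted cone attaches to the correct subspace of $\widetilde{Z}^{|I|-1}(k)$ and the complementary piece $CY^{*I}$ maps precisely into the $\prod_{i<m}X_i\times CX_m$-contribution of $\Z_{\Delta([m],[k])}(\underline{X})$ lying outside $\widetilde{Z}(k)$.
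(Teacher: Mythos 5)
Your plan follows exactly what the paper intends when it says the statement is ``clear from the definition'': the paper gives no proof, so the expected argument is precisely the one you spell out, namely viewing $\widetilde{Z}(k)$ as the subspace of $\Z_{\Delta([m],[k])}(\underline{X})$ obtained from the identification
\[
\Z_{\Delta([m],[k])}(\underline{X})=\bigl(\Z_{\Delta([m-1],[k])}(\underline{X}_{[m-1]})\times X_m\bigr)\cup\Bigl(\textstyle\prod_{i<m}X_i\times CX_m\Bigr)
\]
by shrinking the second piece to $\{*\}\times CX_m$, observing that the fat wedge filtrations agree under intersection, and then restricting the cone decomposition of Theorem \ref{FWF Z} for $\Delta([m],[k])$. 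Your case split on whether $m\in I$, the join decomposition $|\Delta([m],[k])_I|*Y^{*I}=(|\Delta([m-1],[k])_I|*Y^{*I})\cup_{Y^{*I}}CY^{*I}$, and the tracing of the characteristic maps through $\Phi$ are all the right computations, and your stratification of $\widetilde{Z}^i(k)$ is correct. One point worth making explicit in a fully written-out version: for the cone decomposition of $\widetilde{Z}^i(k)$ to follow from that of $\Z^i_{\Delta([m],[k])}(\underline{X})$ by intersection, you need not only that the characteristic map carries $C(|\Delta([m-1],[k])_I|*Y^{*I})$ into $\widetilde{Z}(k)$, but that the full cell $C(|\Delta([m],[k])_I|*Y^{*I})$ meets $\widetilde{Z}(k)$ in exactly this sub-cone; your remark that the $CY^{*I}$ piece lands in the $\prod_{i<m}X_i\times CX_m$ part (which meets $\widetilde{Z}(k)$ only in $\{*\}\times CX_m$) gestures at this, but the reverse inclusion deserves a sentence. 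Otherwise the approach and the bookkeeping are exactly right.
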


As mentioned above, $\Delta([m],[k])$ is totally fillable. Put $\F(\Delta([m-1],[k])_I)=\F(\Delta([m],k)_I)$ for any $\emptyset\ne I\subset[m-1]$.  Then any null homotopy of $\overline{\varphi}_{\Delta([m],[k])_I}$ given by a contraction ordering induces a null homotopy of $\widetilde{\varphi}_I$. Put
$$\widetilde{W}(k)=\bigvee_{\emptyset\ne I\subset[m-1]}\bigvee_{\sigma\in\F(\Delta([m],[k])_I)}(\Sigma^{|\sigma|-1}\widehat{X}^I\vee\Sigma^{|\sigma|-1}\widehat{X}^{I\cup\{m\}}).$$
Then by Proposition \ref{Z(k)}, one gets:

\begin{corollary}
\label{Z half}
If $\underline{X}=\Sigma\underline{Y}$, then any null homotopies of $\overline{\varphi}_{\Delta([m],[k])_I}$ given by a contraction ordering induces a homotopy equivalence $\widetilde{\epsilon}\colon\widetilde{Z}(k)\to\widetilde{W}(k)$ satisfying a homotopy commutative diagram
$$\xymatrix{\widetilde{Z}(k)\ar[r]\ar[d]^{\widetilde{\epsilon}}&\Z_{\Delta([m],[k])}(\underline{X})\ar[d]^{\epsilon_{\Delta([m],[k])}}\\
\widetilde{W}(k)\ar[r]&W_{\Delta([m],[k])}(\underline{X})}$$
where the horizontal arrows are inclusions.
\end{corollary}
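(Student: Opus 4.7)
The plan is to run the decomposition argument of Corollary \ref{decomp general} (and hence of Theorem \ref{decomp Z general}) inside the subspace $\widetilde{Z}(k)$ using the cone decomposition provided by Proposition \ref{Z(k)}, and then to read off the commutative square from the naturality clauses of Lemma \ref{decomp lem} and Proposition \ref{null homotopy}.

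First I would set up the inductive decomposition. By Proposition \ref{Z(k)}, $\widetilde{Z}(k)$ admits a fat wedge filtration in which each stage is obtained from the previous one by attaching cones on $|\Delta([m-1],[k])_I|*Y^{*I}$ along $\widetilde{\varphi}_I$, and by definition each $\widetilde{\varphi}_I$ is the restriction of $\overline{\varphi}_{\Delta([m],[k])_I}$. The paragraph immediately preceding the corollary supplies, for every $\emptyset\ne I\subset[m]$, a null homotopy of $\widetilde{\varphi}_I$ obtained by restricting the contraction-ordering null homotopy of $\overline{\varphi}_{\Delta([m],[k])_I}$. Applying Lemma \ref{decomp lem} inductively along the fat wedge filtration of $\widetilde{Z}(k)$ then produces a homotopy equivalence
$$\widetilde{\epsilon}\colon\widetilde{Z}(k)\xrightarrow{\simeq}\bigvee_{\emptyset\ne I\subset[m]}\Sigma\bigl(|\Delta([m-1],[k])_I|*Y^{*I}\bigr).$$

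Next I would identify this wedge with $\widetilde{W}(k)$. Since $\Sigma(A*B)\simeq\Sigma A\wedge\Sigma B$ and $\underline{X}=\Sigma\underline{Y}$, each summand rewrites as $|\Sigma\Delta([m-1],[k])_I|\wedge\widehat{X}^I$. For $\emptyset\ne I\subset[m-1]$ the full subcomplex $\Delta([m-1],[k])_I$ coincides with $\Delta([m],[k])_I$, and Proposition \ref{totally fillable} gives $|\Sigma\Delta([m],[k])_I|\simeq\bigvee_{\sigma\in\F(\Delta([m],[k])_I)}S^{|\sigma|-1}$; for $I=J\cup\{m\}$ with $\emptyset\ne J\subset[m-1]$ we have $\Delta([m-1],[k])_{J\cup\{m\}}=\Delta([m-1],[k])_J=\Delta([m],[k])_J$, and the smash factor is now $\widehat{X}^{J\cup\{m\}}$; for $I=\{m\}$ the full subcomplex is empty, so the corresponding summand is trivial. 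Collecting these contributions yields exactly $\widetilde{W}(k)$, with the two-summand pattern indexed by $I\subset[m-1]$ and $I\cup\{m\}$ as required.

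Finally, the inclusion $\widetilde{Z}(k)\hookrightarrow\Z_{\Delta([m],[k])}(\underline{X})$ is filtration-preserving by definition and carries the attaching map $\widetilde{\varphi}_I$ into $\overline{\varphi}_{\Delta([m],[k])_I}$, together with the chosen null homotopies. The naturality clause of Lemma \ref{decomp lem}, together with the naturality of null homotopies in Proposition \ref{null homotopy}, therefore produces the claimed homotopy commutative square, in which the bottom map $\widetilde{W}(k)\to W_{\Delta([m],[k])}(\underline{X})$ is the evident wedge inclusion of matched summands. The main obstacle is the bookkeeping in the second step: one must verify that the iterated application of Lemma \ref{decomp lem} produces precisely the summands prescribed by the definition of $\widetilde{W}(k)$, and that the $I=\{m\}$ cone is genuinely trivial so that no spurious summand appears. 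Once this identification is secured, the homotopy commutativity of the square is automatic from the naturality already built into the ambient decomposition $\epsilon_{\Delta([m],[k])}$.
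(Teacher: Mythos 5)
Your argument is correct and is essentially the proof the paper intends: the paper derives the corollary directly from the cone decomposition of Proposition \ref{Z(k)} together with the restricted null homotopies noted just before the statement, applying Lemma \ref{decomp lem} filtration by filtration and invoking its naturality for the commuting square, exactly as you do. Your extra bookkeeping identifying the resulting wedge with $\widetilde{W}(k)$ (including the observation that the $I=\{m\}$ piece contributes only a contractible cone) is a correct filling-in of details the paper leaves implicit.
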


We next show the naturality of the homotopy equivalence $\widetilde{\epsilon}$. Let $q$ be the composite of maps
\begin{multline*}
\Sigma^{k-1}\widehat{X}^{[m]}\simeq(\Sigma^{k-2}\widehat{X}^{[m-1]})*X_m=(C\Sigma^{k-2}\widehat{X}^{[m-1]}\times X_m)\cup(\Sigma^{k-2}\widehat{X}^{[m-1]}\times CX_m)\\
\xrightarrow{\textrm{proj}}(\Sigma^{k-1}\widehat{X}^{[m-1]}\times X_m)\cup(*\times CX_m).
\end{multline*}

\begin{proposition}
\label{naturality half}
If $\underline{X}=\Sigma\underline{Y}$, then the homotopy equivalence $\widetilde{\epsilon}$ of Corollary \ref{Z half} satisfies a homotopy commutative diagram
$$\xymatrix{\Sigma^{k-1}\widehat{X}^{[m]}\ar[r]^(.3)q\ar[d]&(\Sigma^{k-1}\widehat{X}^{[m-1]}\times X_m)\cup(*\times CX_m)\ar[d]\\
\widetilde{W}(k)\ar[d]^{\tilde{\epsilon}^{-1}}&(W_{\Delta([m-1],[k])}(\underline{X}_{[m-1]})\times X_m)\cup(*\times CX_m)\ar[d]^{\epsilon_{\Delta([m-1],[k])}^{-1}\times 1}\\
\widetilde{Z}(k)\ar@{=}[r]&\widetilde{Z}(k)}$$
where the upper vertical arrows are inclusions.
\end{proposition}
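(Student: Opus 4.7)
The plan is to identify the wedge summand $\Sigma^{k-1}\widehat{X}^{[m]}$ of $\widetilde{W}(k)$ with the subcell of $\widetilde{Z}(k)$ produced by the cone decomposition of Proposition \ref{Z(k)} for $I=[m]$ and filling $\sigma=[k]$, and then show that the restriction of $\widetilde{\epsilon}^{-1}$ to this summand agrees with the composite through $q$ and $\epsilon_{\Delta([m-1],[k])}^{-1}\times 1$. Since this summand is attached by the restriction of $\overline{\varphi}_{\Delta([m],[k])}$ to $\widetilde{Z}(k)$, the problem reduces to describing this restricted attaching map after the null homotopy coming from the chosen contraction ordering.

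First I would fix a contraction ordering of $[m]-[k]$ that places vertex $m$ last. Such an ordering exists because contraction orderings are built as joins of local ones, and its restriction to $[m-1]-[k]$ is the contraction ordering used to define $\epsilon_{\Delta([m-1],[k])}$. By Proposition \ref{null homotopy} applied to this choice, $\overline{\varphi}_{\Delta([m],[k])}$ deforms, up to homotopy, to $(f\wedge 1_{Y^{*[m]}})\circ\Sigma(1_{|K|}\wedge\omega)$, where $f\colon|\Sigma\Delta([m],[k])|\to\RZ_{\Delta([m],[k])}$ is assembled from $C\varphi_K$ and the contraction null homotopy, and where by the naturality clause of the same proposition the $[m-1]$-part of $f$ coincides with the analogous map for $\Delta([m-1],[k])$.

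Next I would exploit the join decomposition $Y^{*[m]}=Y^{*[m-1]}*Y_m$. Because $m$ is placed last in the contraction ordering, the portion of $f$ involving vertex $m$ factors through the subspace $\widetilde{Z}(k)\subset\Z_{\Delta([m],[k])}(\underline{X})$ in the expected product form $(\RZ_{\Delta([m-1],[k])}\times S^0)\cup(*\times CS^0)$; combined with the splitting of the higher Whitehead product $\omega$ along the join, the summand $\Sigma^{k-1}\widehat{X}^{[m]}\simeq\Sigma^{k-2}\widehat{X}^{[m-1]}*X_m$ is mapped into $\widetilde{Z}(k)$ precisely by the composite of $q$ followed by $\epsilon_{\Delta([m-1],[k])}^{-1}\times 1_{X_m}$. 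Invoking Lemma \ref{decomp lem} and Corollary \ref{decomp general} for this summand then yields the homotopy commutative diagram.

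The main obstacle is reconciling the join decomposition $\Sigma^{k-2}\widehat{X}^{[m-1]}*X_m$ arising from $\omega$ with the product decomposition $(\Sigma^{k-1}\widehat{X}^{[m-1]}\times X_m)\cup(*\times CX_m)$ used to define $\widetilde{Z}(k)$, and recognizing that $q$ is exactly the canonical identification between the two models. Care is also required to ensure the naturality clause of Proposition \ref{null homotopy} is invoked correctly, so that the $[m-1]$-part of the null homotopy of $\varphi_{\Delta([m],[k])}$ is the one that produces $\epsilon_{\Delta([m-1],[k])}$; this is what placing $m$ last in the contraction ordering guarantees.
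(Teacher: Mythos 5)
Your outline follows essentially the same route as the paper: restrict attention to the top cone of the decomposition of $\widetilde{Z}(k)$ from Proposition \ref{Z(k)}, use a contraction ordering ending at the vertex $m$ so that the null homotopies are compatible with the $[m-1]$-part, and recognize $q$ as the canonical comparison between the join model $\Sigma^{k-2}\widehat{X}^{[m-1]}*X_m$ and the product model $(\Sigma^{k-1}\widehat{X}^{[m-1]}\times X_m)\cup(*\times CX_m)$. You have also correctly located the crux.

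However, the crux is exactly the step you dispose of with the phrase ``combined with the splitting of the higher Whitehead product $\omega$ along the join,'' and as written this is an assertion of the conclusion rather than an argument. What has to be proved is a precise compatibility square for the universal higher Whitehead product: writing $A\rtimes B=(A\times B)/(*\times B)$ and letting $p\colon Y^{*[m-1]}\rtimes\Sigma Y_m\to Y^{*[m]}$ be the collapse followed by the equivalence $Y^{*[m-1]}\wedge\Sigma Y_m\simeq Y^{*[m]}$, one needs that $w_{[m]}\circ p$ agrees, up to homotopy, with the inclusion composed with $w_{[m-1]}\rtimes 1$ into $T^{m-1}(\underline{X})/X_m$. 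The paper makes this workable by passing to the quotient $\widetilde{Z}^{m-1}(k)/CX_m$ (a homotopy equivalence, since $CX_m$ is a contractible subcomplex), so that the product decomposition becomes a half-smash; the compatibility square then propagates through the definitions of $\overline{\varphi}_L$, $\widetilde{\varphi}$, $\widetilde{\epsilon}$ and $\epsilon_L$, and the proof closes with the observation that $p\circ q\simeq 1$. Your proposal never introduces this quotient or the section/retraction pair $(q,p)$, so the comparison between the two models of the top summand is not actually carried out; filling in that square is the substantive content of the proposition and should be made explicit.
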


\begin{proof}
For pointed spaces $A,B$, we put $A\rtimes B=(A\times B)/(*\times B)$. Let $p$ be the composite of the projection $Y^{*[m-1]}\rtimes\Sigma Y_m\to Y^{*[m-1]}\wedge\Sigma Y_m$ and the natural homotopy equivalence $Y^{*[m-1]}\wedge\Sigma Y_m\simeq Y^{*[m]}$. Then there is a homotopy commutative diagram
$$\xymatrix{Y^{*[m-1]}\rtimes\Sigma Y_m\ar[r]^(.43){w_1\rtimes 1}\ar[d]^p&T^{m-2}(\underline{X}_{[m-1]})\rtimes\Sigma Y_m\ar[d]^{\textrm{incl}}\\
Y^{*[m]}\ar[r]^{w_2}&T^{m-1}(\underline{X})/X_m}$$
where $w_i$ are the higher Whitehead products. Put $L=\Delta([m-1],[k])$. Then by the definition of $\overline{\varphi}_L$, one gets a homotopy commutative diagram
$$\xymatrix{(|L|*Y^{*[m-1]})\rtimes\Sigma Y_m\ar[r]^{\overline{\varphi}_L\rtimes 1}\ar[d]^p&\Z_L^{m-2}(\underline{X}_{[m-1]})\rtimes X_m\ar[d]\\
|L|*Y^{*[m]}\ar[r]^{r\circ\widetilde{\varphi}}&\widetilde{Z}^{m-1}(k)/CX_m}$$
where $r\colon\widetilde{Z}^{m-1}(k)\to\widetilde{Z}^{m-1}(k)/CX_m$ is the projection which is a homotopy equivalence, and the right vertical arrow is the inclusion. Thus by the definitions of $\widetilde{\epsilon}$ and $\epsilon_L$, one obtains a homotopy commutative diagram
$$\xymatrix{\Sigma^{k-1}\widehat{X}^{[m-1]}\rtimes X_m\ar[r]^(.6)p\ar[d]&\Sigma^{k-1}\widehat{X}^{[m]}\ar[d]\\
W_L(\underline{X}_{[m-1]})\rtimes X_m\ar[d]^{\epsilon_L^{-1}\rtimes 1}&\widetilde{W}(k)\ar[d]^{\tilde{\epsilon}^{-1}}\\
\widetilde{Z}(k)/CX_m\ar[r]^(.55){r^{-1}}&\widetilde{Z(k)}.}$$
Since $p\circ q\simeq 1$, the proof is completed.
\end{proof}


\subsection{Main Theorem}

If $K$ is totally fillable, then we fix a contraction ordering of $I-\sigma$ for any $\sigma\in\F(K_I)$ and $\emptyset\ne I\subset[m]$. Let $a_i\colon X_i\to DJ_K(\underline{X})$ be the inclusion for $i=1,\ldots,m$ as above. Now we state the main theorem.

\begin{theorem}
\label{main}
Suppose that $\underline{X}=\Sigma\underline{Y}$ and $K$ is a totally fillable complex. Then for $\sigma\in\F(K_I)$, the composite
$$\Sigma^{|\sigma|-1}\widehat{X}^I\to W_K(\underline{X})\xrightarrow{\epsilon_K}\Z_K(\underline{X})\xrightarrow{w}DJ_K(\Sigma\underline{X})$$
is the iterated Whitehead product
$$[[\cdots[w_\sigma,a_{i_1}],\cdots],a_{i_k}]$$
up to permutation of the smash factors of $\Sigma^{|\sigma|-1}\widehat{X}^I$, where $i_1<\cdots<i_k$ is a contraction ordering of $I-\sigma$.
\end{theorem}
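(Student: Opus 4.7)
The plan is to combine the naturality statements already assembled with an induction on the number of vertices outside $\sigma$.

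\emph{Reductions.} First I would reduce to the case $I = [m]$. The corollary stating that $\epsilon_K$ retracts onto $\epsilon_{K_I}$, combined with the naturality of $w$ (Proposition \ref{w naturality}), expresses the composite in question as the analogous composite for $K_I$ followed by $DJ_{K_I}(\Sigma\underline{X}_I) \hookrightarrow DJ_K(\Sigma\underline{X})$; since this inclusion preserves both $a_i$ and $w_\tau$, it suffices to treat $I = [m]$. Second, Corollary \ref{naturality Delta} applied to $\sigma \in \F(K)$ gives a homotopy commutative square in which the $\Sigma^{|\sigma|-1}\widehat{X}^{[m]}$-summand of $W_{\Delta([m],\sigma)}(\underline{X})$ maps identically onto the corresponding summand of $W_K(\underline{X})$. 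Combined with the naturality of $w$ under $\Delta([m],\sigma) \hookrightarrow K$, this lets me replace $K$ by $\Delta([m],\sigma)$. After relabeling I may assume $\sigma = [k]$, so $K = \Delta([m],[k])$.

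\emph{Induction on $m-k$.} The base case $m=k$ is immediate: $K = \partial\Delta^{[m]}$, and $w$ is by Porter's definition the universal higher Whitehead product $w_{[m]} = w_\sigma$, so the unique summand $\Sigma^{m-1}\widehat{X}^{[m]}$ of $W_K(\underline{X})$ maps to $w_\sigma$. For the inductive step $m > k$, Corollary \ref{Z half} together with Proposition \ref{naturality half} identify, up to homotopy, the composite of interest with
$$\Sigma^{k-1}\widehat{X}^{[m]} \xrightarrow{q} (\Sigma^{k-1}\widehat{X}^{[m-1]} \times X_m) \cup (* \times CX_m) \xrightarrow{\epsilon_{\Delta([m-1],[k])}^{-1} \times 1} \widetilde{Z}(k) \hookrightarrow \Z_{\Delta([m],[k])}(\underline{X}) \xrightarrow{w} DJ_{\Delta([m],[k])}(\Sigma\underline{X}).$$

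\emph{Geometric analysis.} I then compute $w|_{\widetilde{Z}(k)}$ explicitly. The pinch $CX_m \to \Sigma X_m$ collapses $X_m \subset CX_m$ to the basepoint, so on the piece $\Z_{\Delta([m-1],[k])}(\underline{X}_{[m-1]}) \times X_m$ the composite is the $w$-map for $\Delta([m-1],[k])$ applied to the first $m-1$ coordinates (with the $X_m$-coordinate collapsed); restricted to the $\Sigma^{k-1}\widehat{X}^{[m-1]}$-summand, the inductive hypothesis identifies this with $F := [[\cdots[w_\sigma,a_{k+1}],\cdots],a_{m-1}]$. On the other piece $* \times CX_m$ the composite is the pinch $CX_m \to \Sigma X_m$ followed by $a_m$. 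Unwinding $q$, the domain $\Sigma^{k-1}\widehat{X}^{[m]}$ is identified with the join $\Sigma^{k-2}\widehat{X}^{[m-1]} * X_m = (C\Sigma^{k-2}\widehat{X}^{[m-1]} \times X_m) \cup (\Sigma^{k-2}\widehat{X}^{[m-1]} \times CX_m)$, and the two halves map, after $q$ and the above analysis, to $F$ and to $a_m \circ (\text{pinch})$ respectively. This is precisely Porter's pushout presentation of the Whitehead product $[F,a_m]$, completing the induction; the vertex $m$ is last in the contraction ordering of $[m] - [k]$ by construction of the ordering from the spanning tree of $\overline{\Delta([m],[k])}$, which produces the prescribed form $[[\cdots[w_\sigma,a_{i_1}],\cdots],a_{i_k}]$.

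\emph{Main obstacle.} The hardest step is the final identification: verifying that the geometric composite on $\widetilde{Z}(k)$ indeed matches Porter's pushout definition of $[F,a_m]$. The crucial ingredient is that $a_m$ itself is defined via the pinch $CX_m \to \Sigma X_m$ collapsing $X_m$ to the basepoint, so the $(* \times CX_m)$-half of the target of $q$ realizes precisely the ``$a_m$-side'' of the pushout, while the inductive hypothesis supplies the ``$F$-side''; the two halves fit together consistently along their common face $* \times X_m$, which is exactly the compatibility required to assemble them into the Whitehead product.
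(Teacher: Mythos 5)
Your proposal is correct and follows essentially the same route as the paper: reduce to $K_I$ via the retraction property of $\epsilon_K$, then to the model complex $\Delta([m],\sigma)$ via Corollary \ref{naturality Delta}, and finally induct on the vertices outside $\sigma$ (in the contraction ordering) using Corollary \ref{Z half} and Proposition \ref{naturality half}. Your ``geometric analysis'' of $w$ on the two pieces of $\widetilde{Z}(k)$, assembled via Porter's join presentation of the Whitehead product, is exactly the content of the paper's large diagram in Lemma \ref{main lem}, where that identification is packaged as the factorization through the universal Whitehead product $\overline{w}$ followed by $(w\circ\epsilon_L^{-1})\vee 1$.
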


Let $\widetilde{a}_i\colon S^2\to DJ_K$ be the composite of the inclusion of the bottom cell $S^2\to\C P^\infty$ and the inclusion of the $i$-th factor $\C P^\infty\to DJ_K$, and for $\sigma\subset[m]$, let $\widetilde{w}_\sigma$ be the higher Whitehead product of $\widetilde{a}_i$ for $i\in \sigma$ if it is defined. The following is immediate from Theorem \ref{main} and the naturality of (higher) Whitehead products.

\begin{corollary}
If $K$ is a totally fillable complex, then for $\sigma\in\F(K_I)$, the composite
$$S^{|\sigma|+|I|-1}\to\bigvee_{\emptyset\ne I\subset[m]}\bigvee_{\sigma\in\F(K_I)}S^{|\sigma|+|I|-1}\xrightarrow{\epsilon_K}\Z_K\xrightarrow{\widetilde{w}}DJ_K$$
is the iterated Whitehead product
$$[[\cdots[\widetilde{w}_\sigma,\widetilde{a}_{i_1}],\cdots],\widetilde{a}_{i_k}]$$
up to sign, where $i_1<\cdots<i_k$ is a contraction ordering of $I-\sigma$.
\end{corollary}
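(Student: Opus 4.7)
The plan is to reduce to a universal case via naturality, and then to run an induction on the number of extra vertices $|I-\sigma|$. First, using the fact that $\epsilon_K$ retracts onto $\epsilon_{K_I}$ together with Proposition \ref{w naturality}, I would reduce to the case $I=[m]$. Then Corollary \ref{naturality Delta} combined with the naturality of higher Whitehead products lets me replace $K$ with $\Delta([m],\sigma)$: the inclusion $\Delta([m],\sigma)\hookrightarrow K$ identifies the summand $\Sigma^{|\sigma|-1}\widehat{X}^{[m]}$ on both sides, and by naturality it is compatible with both $w$ and the relevant iterated Whitehead product. After a relabeling of vertices I may assume $\sigma=[k]$ and that the contraction ordering of $[m]-[k]$ is $k+1<\cdots<m$.

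With these reductions, I would induct on $m-k$. The base case $m=k$ is immediate: there $K=\partial\Delta^{[m]}$, $W_K(\underline{X})$ has the single summand $\Sigma^{m-1}\widehat{X}^{[m]}$, and by the very definition of the higher Whitehead product recalled in Section 3.3 the composite $\Sigma^{m-1}\widehat{X}^{[m]}\simeq\Z_{\partial\Delta^{[m]}}(\underline{X})\xrightarrow{w}DJ_{\partial\Delta^{[m]}}(\Sigma\underline{X})$ is exactly $w_{[m]}$. For the inductive step, I would apply Proposition \ref{naturality half}, which factors the summand map $\Sigma^{k-1}\widehat{X}^{[m]}\to W_{\Delta([m],[k])}(\underline{X})\to\Z_{\Delta([m],[k])}(\underline{X})$ through $q$ followed by $\epsilon_{\Delta([m-1],[k])}^{-1}\times 1$ into $\widetilde{Z}(k)$; the inductive hypothesis then identifies $w\circ\epsilon_{\Delta([m-1],[k])}^{-1}$ restricted to the summand $\Sigma^{k-1}\widehat{X}^{[m-1]}$ with $f=[[\cdots[w_{[k]},a_{k+1}],\cdots],a_{m-1}]$.

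The key computation is then to see that the resulting composite is precisely $[f,a_m]$. The crucial observation is that $w$ sends $\widetilde{Z}(k)=(\Z_{\Delta([m-1],[k])}(\underline{X}_{[m-1]})\times X_m)\cup(*\times CX_m)$ into the wedge $DJ_{\Delta([m-1],[k])}(\Sigma\underline{X}_{[m-1]})\vee\Sigma X_m$ inside $DJ_{\Delta([m],[k])}(\Sigma\underline{X})$: the coordinate $X_m\subset CX_m$ in the first piece is pinched to the basepoint, while the cone $CX_m$ in the second piece is pinched to $\Sigma X_m$. Setting $A=\Sigma^{k-2}\widehat{X}^{[m-1]}$ and $B=X_m$ so that $\Sigma^{k-1}\widehat{X}^{[m]}=A*B=(CA\times B)\cup(A\times CB)$, a direct unwinding of $q$ and of $w|_{\widetilde{Z}(k)}$ shows that the composite is $f$ on the first piece (the $B$-coordinate being forgotten) and $a_m$ composed with the pinch on the second. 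This is on the nose the standard presentation $(f\vee a_m)\circ w_0$ of the Whitehead product, where $w_0\colon A*B\to\Sigma A\vee\Sigma B$ is the universal Whitehead map (the $m=2$ case of $w$), which completes the induction.

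The obstacle I anticipate is essentially bookkeeping. At each inductive step the identification $\Sigma^{|\sigma|-1}\widehat{X}^I\cong\Sigma(\Sigma^{|\sigma|-2}\widehat{X}^{I-\{i_k\}}\wedge X_{i_k})$ depends on a choice of permutation of the smash factors, which is the source of the ``up to permutation of the smash factors'' clause in the statement. Care is also needed to verify that the null homotopies appearing in Proposition \ref{naturality half} and Corollary \ref{Z half} are chosen consistently with those coming from the fixed contraction ordering, so that no extraneous terms interfere with the comparison to $(f\vee a_m)\circ w_0$.
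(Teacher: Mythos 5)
Your argument is correct and is essentially the paper's own: the reduction to $M=\Delta([m],[k])$ via Corollary \ref{naturality Delta}, followed by the induction on $m-k$ through $\widetilde{Z}(k)$, Proposition \ref{naturality half}, and the identification of the resulting composite with $(f\vee a_m)\circ w_0$, is precisely the proof of Lemma \ref{main lem} and Theorem \ref{main}. The only step you leave implicit is the final passage from $w$ to $\widetilde{w}$ (Proposition \ref{decomp w} together with naturality of Whitehead products along the bottom-cell inclusions $S^2\to\C P^\infty$, which also converts ``up to permutation of smash factors'' into ``up to sign''), and that is exactly the one-line deduction by which the paper obtains this corollary from Theorem \ref{main}.
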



\subsection{Proof of Theorem \ref{main}}

\begin{lemma}
\label{main lem}
If $\underline{X}=\Sigma\underline{Y}$ and $k<m$, then the composite
$$\Sigma^{k-1}\widehat{X}^{[m]}\to W_M(\underline{X})\xrightarrow{\epsilon_M^{-1}}\Z_M(\underline{X})\xrightarrow{w}DJ_M(\Sigma\underline{X})$$
is the iterated Whitehead product $[[\cdots[w_{[k]},a_{k+1}],\cdots],a_m]$, where $M=\Delta([m],[k])$.
\end{lemma}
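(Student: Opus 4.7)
The plan is to proceed by induction on $m-k \geq 0$, formally allowing the trivial base case $m=k$: there $M = \partial\Delta^{[k]}$, so $W_M(\underline{X}) = \Sigma^{k-1}\widehat{X}^{[k]}$, the equivalence $\epsilon_M$ identifies it with $\Z_M(\underline{X}) = X_1*\cdots*X_k$, and the iterated bracket collapses to $w_{[k]}$, which is precisely what $w\circ\epsilon_M^{-1}$ equals by the very definition of the higher Whitehead product.

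For the inductive step, write $M' = \Delta([m-1], [k])$ and assume the claim holds for $M'$. The summand $\Sigma^{k-1}\widehat{X}^{[m]}$ of $W_M(\underline{X})$, indexed by $I = [m]$ with $\sigma = [k]$, lies inside $\widetilde{W}(k)$ because $[m]\ni m$. By Corollary \ref{Z half} the restricted composite $\Sigma^{k-1}\widehat{X}^{[m]} \to W_M(\underline{X}) \xrightarrow{\epsilon_M^{-1}} \Z_M(\underline{X})$ rewrites via $\widetilde{W}(k) \xrightarrow{\widetilde{\epsilon}^{-1}} \widetilde{Z}(k) \hookrightarrow \Z_M(\underline{X})$, and Proposition \ref{naturality half} then factors this further as
\begin{align*}
\Sigma^{k-1}\widehat{X}^{[m]} &\xrightarrow{q} (\Sigma^{k-1}\widehat{X}^{[m-1]}\times X_m)\cup(*\times CX_m) \\
&\to (W_{M'}(\underline{X}_{[m-1]})\times X_m)\cup(*\times CX_m) \\
&\xrightarrow{\epsilon_{M'}^{-1}\times 1} \widetilde{Z}(k) \hookrightarrow \Z_M(\underline{X}).
\end{align*}

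Now post-compose with $w\colon\Z_M(\underline{X})\to DJ_M(\Sigma\underline{X})$ and analyse the two pieces. On $W_{M'}\times X_m$ the pinch $CX_m\to\Sigma X_m$ kills $X_m$ (the base of the cone), so a point $(\omega,x)$ goes to $(w\circ\epsilon_{M'}^{-1}(\omega),*)\in DJ_{M'}\hookrightarrow DJ_M(\Sigma\underline{X})$; restricted to the summand $\Sigma^{k-1}\widehat{X}^{[m-1]}$, the inductive hypothesis identifies this with $\alpha_{m-1}:=[\cdots[w_{[k]},a_{k+1}],\cdots,a_{m-1}]$. On $*\times CX_m$ one obtains $(*,c)\mapsto a_m(\mathrm{pinch}(c))\in\Sigma X_m\hookrightarrow DJ_M(\Sigma\underline{X})$. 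Finally, pre-composing with $q$ and writing $\Sigma^{k-1}\widehat{X}^{[m]}\simeq A'*X_m$ with $A'=\Sigma^{k-2}\widehat{X}^{[m-1]}$, one recognises $q$ as the canonical map $(CA'\times X_m)\cup(A'\times CX_m)\to(\Sigma A'\times X_m)\cup(*\times CX_m)$, so the resulting formulas $(c,x)\mapsto\alpha_{m-1}(\mathrm{pinch}(c))$ and $(a,c)\mapsto a_m(\mathrm{pinch}(c))$ are exactly the join-form definition of the Whitehead product $[\alpha_{m-1},a_m]$. This yields the iterated bracket required by the lemma and closes the induction.

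The main obstacle is the diagram chase: one must verify that the naturality from Corollary \ref{Z half} and Proposition \ref{naturality half} really transports the composite onto the pushout $(\Sigma^{k-1}\widehat{X}^{[m-1]}\times X_m)\cup(*\times CX_m)$, and then check that the pinch maps implicit in $w$ combine with $q$ to reproduce the standard join description of $[\alpha_{m-1},a_m]$. Once the relevant squares are in place, the identification with the Whitehead product is forced by the very construction of $q$.
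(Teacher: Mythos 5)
Your argument is correct and follows essentially the same route as the paper: induction on $m$ with the degenerate case $M=\partial\Delta^{[k]}$ reducing to Porter's definition of $w_{[k]}$, and the inductive step driven by Corollary \ref{Z half} and Proposition \ref{naturality half} to factor the composite through $q$ and $\widetilde{Z}(k)$. The only difference is cosmetic: the paper packages the final identification as the universal Whitehead product $\overline{w}$ of identity maps followed by $(w\circ\epsilon_{M'}^{-1})\vee 1$ and invokes naturality, whereas you verify the join-form description of $[\alpha_{m-1},a_m]$ directly; these are the same computation.
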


\begin{proof}
Put $L=\Delta([m-1],[k])$. By Proposition \ref{naturality half}, we see that there is a homotopy commutative diagram
$$\xymatrix{\Sigma^{k-1}\widehat{X}^{[m]}\ar@{=}[r]\ar[d]^{\overline{w}}&\Sigma^{k-1}\widehat{X}^{[m]}\ar[d]^q\ar@{=}[r]&\Sigma^{k-1}\widehat{X}^{[m]}\ar[dd]\\
\Sigma^{k-1}\widehat{X}^{[m-1]}\vee\Sigma X_m\ar[d]&(\Sigma^{k-1}\widehat{X}^{[m-1]}\times X_m)\cup(*\times CX_m)\ar[l]_(.58){\textrm{proj}}\ar[d]\\
W_L(\underline{X}_{[m-1]})\vee\Sigma X_m\ar[d]^{\epsilon_L^{-1}\vee 1}&(W_L(\underline{X}_{[m-1]})\times X_m)\cup(*\times CX_m)\ar[l]_(.6){\textrm{proj}}\ar[d]^{\epsilon_L^{-1}\times 1}&\widetilde{W}(k)\ar[d]^{\tilde{\epsilon}^{-1}}\\
\Z_L(\underline{X}_{[m-1]})\vee\Sigma X_m\ar[d]^{w\vee 1}&\widetilde{Z}(k)\ar[l]_(.41){\textrm{proj}}\ar@{=}[r]\ar[d]^w&\widetilde{Z}(k)\ar[d]^w\\
DJ_L(\Sigma\underline{X}_{[m-1]})\vee\Sigma X_m\ar@{=}[r]&DJ_L(\Sigma\underline{X}_{[m-1]})\vee\Sigma X_m\ar@{=}[r]&DJ_L(\Sigma\underline{X}_{[m-1]})\vee\Sigma X_m}$$
where $\overline{w}$ is the Whitehead product of the identity maps of $\Sigma^{k-1}\widehat{X}^{[m-1]}$ and $\Sigma X_m$. On the other hand, by Corollary \ref{naturality Delta} there is a commutative diagram
$$\xymatrix{\widetilde{W}(k)\ar[d]^{\tilde{\epsilon}^{-1}}\ar[r]&W_M(\underline{X})\ar[d]^{\epsilon_M^{-1}}\\
\widetilde{Z}(k)\ar[d]^w\ar[r]&\Z_M(\underline{X})\ar[d]^w\\
DJ_L(\Sigma\underline{X}_{[m-1]})\vee\Sigma X_m\ar@{=}[r]&DJ_M(\Sigma\underline{X}).}$$
Then by juxtaposing the above two diagrams, one gets that the composite in the statement is the Whitehead product of the identity map of $\Sigma X_m$ and $w\circ\epsilon_L$. Thus the proof is completed by induction on $m$.
\end{proof}

\begin{proof}
[Proof of Theorem \ref{main}]
The proof is done by Theorem \ref{decomp Z general}, Corollary \ref{naturality Delta} and Lemma \ref{main lem}, where the induction in the proof of Lemma \ref{main lem} is done by a contraction ordering.
\end{proof}


\begin{thebibliography}{99}
\bibitem{A}S. Abramyan, Iterated Higher Whitehead products in topology of moment-angle complexes, \url{arXiv:1708.01694v1}.
\bibitem{BBCG}A. Bahri, M. Bendersky, F.R. Cohen, and S. Gitler, The polyhedral product functor: a method of decomposition for moment-angle complexes, arrangements and related spaces, \emph{Advances in Math.} {\bf 225} (2010), 1634-1668
\bibitem{BW}A. Bj\"orner and M.I. Wachs, {\it Shellable nonpure complexes and posets. I}, Trans. AMS {\bf 348} (1996), 1299-1327; {\it Shellable nonpure complexes and posets. II}, Trans. AMS {\bf 349} (1997), 3945-3975.
\bibitem{DJ}M.W. Davis and T. Januszkiewicz, Convex polytopes, Coxeter orbifolds and torus actions, \emph{Duke Math. J.} {\bf 62} (1991), 417-451.
\bibitem{GT2}J. Grbi\'c and S. Theriault, Higher Whitehead products in toric topology, \url{arXiv:1011.2133v2}.
\bibitem{GT3}J. Grbi\'c and S. Theriault, The homotopy type of the polyhedral product for shifted complexes, \emph{Advances in Math.} {\bf 245} (2013), 690-715.
\bibitem{GW}V. Gruji\'c and V. Welker, Discrete Morse theory for moment-angle complexes of pairs $(D^n,S^{n-1})$, \emph{Monatsh. Math.} {\bf 176} (2015), no. 2, 255-273.
\bibitem{HKS}S. Hasui, D. Kishimoto, and T. Sato, $p$-local stable splitting of quasitoric manifolds, \emph{Osaka J. Math.} \textbf{53} (2016), 843-854.
\bibitem{IK1}K. Iriye and D. Kishimoto, Decompositions of polyhedral products for shifted complexes, \emph{Advances in Math.} {\bf 245} (2013), 716-736.
\bibitem{IK3}K. Iriye and D. Kishimoto, Fat wedge filtrations and decomposition of polyhedral products, accepted by \emph{Kyoto J. Math.}
\bibitem{IK4}K. Iriye and D. Kishimoto, Golodness and polyhedral products of simplicial complexes with minimal Taylor resolutions, \emph{Homology Homotopy Appl.} \textbf{20} (2018), no. 1, 69-78.
\bibitem{IK5}K. Iriye and D. Kishimoto, Golodness and polyhedral products for two-dimensional simplicial complexes, \emph{Forum Math.} \textbf{30} (2018), no. 2, 527-532.
\bibitem{P}T. Porter, Higher-order Whitehead products, \emph{Topology} {\bf 3} (1965), 123-135.
\end{thebibliography}
\end{document}